\theoremstyle{plain}
\newtheorem{thrm}{Theorem}[section]
\newtheorem{lemma}[thrm]{Lemma}
\newtheorem{cor}[thrm]{Corollary}
\newtheorem{rmrk}[thrm]{Remark}
\begin{document}

\newcommand{\SL}{\mathcal L^{1,p}( D)}
\newcommand{\Lp}{L^p( Dega)}
\newcommand{\CO}{C^\infty_0( \Omega)}
\newcommand{\Rn}{\mathbb R^n}
\newcommand{\Rm}{\mathbb R^m}
\newcommand{\R}{\mathbb R}
\newcommand{\Om}{\Omega}
\newcommand{\Hn}{\mathbb H^n}
\newcommand{\aB}{\alpha B}
\newcommand{\eps}{\ve}
\newcommand{\BVX}{BV_X(\Omega)}
\newcommand{\p}{\partial}
\newcommand{\IO}{\int_\Omega}
\newcommand{\bG}{\boldsymbol{G}}
\newcommand{\bg}{\mathfrak g}
\newcommand{\bz}{\mathfrak z}
\newcommand{\bv}{\mathfrak v}
\newcommand{\Bux}{\mbox{Box}}
\newcommand{\e}{\ve}
\newcommand{\X}{\mathcal X}
\newcommand{\Y}{\mathcal Y}
\newcommand{\W}{\mathcal W}

\numberwithin{equation}{section}

\newcommand{\RN} {\mathbb{R}^N}
\newcommand{\Sob}{S^{1,p}(\Omega)}
\newcommand{\Dxk}{\frac{\partial}{\partial x_k}}
\newcommand{\Co}{C^\infty_0(\Omega)}
\newcommand{\Je}{J_\ve}
\newcommand{\beq}{\begin{equation}}
\newcommand{\bea}[1]{\begin{array}{#1} }
\newcommand{\eeq}{ \end{equation}}
\newcommand{\ea}{ \end{array}}
\newcommand{\eh}{\ve h}
\newcommand{\Dxi}{\frac{\partial}{\partial x_{i}}}
\newcommand{\Dyi}{\frac{\partial}{\partial y_{i}}}
\newcommand{\Dt}{\frac{\partial}{\partial t}}
\newcommand{\aBa}{(\alpha+1)B}
\newcommand{\GF}{\psi^{1+\frac{1}{2\alpha}}}
\newcommand{\GS}{\psi^{\frac12}}
\newcommand{\HFF}{\frac{\psi}{\rho}}
\newcommand{\HSS}{\frac{\psi}{\rho}}
\newcommand{\HFS}{\rho\psi^{\frac12-\frac{1}{2\alpha}}}
\newcommand{\HSF}{\frac{\psi^{\frac32+\frac{1}{2\alpha}}}{\rho}}
\newcommand{\AF}{\rho}
\newcommand{\AR}{\rho{\psi}^{\frac{1}{2}+\frac{1}{2\alpha}}}
\newcommand{\PF}{\alpha\frac{\psi}{|x|}}
\newcommand{\PS}{\alpha\frac{\psi}{\rho}}
\newcommand{\ds}{\displaystyle}
\newcommand{\Zt}{{\mathcal Z}^{t}}
\newcommand{\XPSI}{2\alpha\psi \begin{pmatrix} \frac{x}{|x|^2}\\ 0 \end{pmatrix} - 2\alpha\frac{{\psi}^2}{\rho^2}\begin{pmatrix} x \\ (\alpha +1)|x|^{-\alpha}y \end{pmatrix}}
\newcommand{\Z}{ \begin{pmatrix} x \\ (\alpha + 1)|x|^{-\alpha}y \end{pmatrix} }
\newcommand{\ZZ}{ \begin{pmatrix} xx^{t} & (\alpha + 1)|x|^{-\alpha}x y^{t}\\
     (\alpha + 1)|x|^{-\alpha}x^{t} y &   (\alpha + 1)^2  |x|^{-2\alpha}yy^{t}\end{pmatrix}}
\newcommand{\norm}[1]{\lVert#1 \rVert}
\newcommand{\ve}{\varepsilon}

\title[Modica type gradient estimates, etc.]{Modica type gradient estimates for reaction-diffusion equations and a parabolic counterpart of a conjecture of De Giorgi}

\dedicatory{Dedicated to Ermanno Lanconelli, on the occasion of his birthday, with deep affection and admiration}

\author{Agnid Banerjee}
\address{Department of Mathematics\\University of California, Irvine \\
CA- 92697} \email[Agnid Banerjee]{agnidban@gmail.com}
\thanks{First author was supported in part by the second author's
NSF Grant DMS-1001317 and by a postdoctoral grant of the Institute Mittag-Leffler}

\author{Nicola Garofalo}
\address{Dipartimento di Ingegneria Civile, Edile e Ambientale (DICEA) \\ Universit\`a di Padova\\ 35131 Padova, ITALY}
\email[Nicola Garofalo]{rembdrandt54@gmail.com}
\thanks{Second author was supported in part by NSF Grant DMS-1001317 and by a grant of the University of Padova, ``Progetti d'Ateneo 2013"}

%
%
%
\keywords{}
\subjclass{}

\maketitle

\begin{abstract}
We  continue the study of Modica type gradient estimates for non-homogeneous parabolic equations  initiated in \cite{BG}. First, we  show that for the parabolic minimal surface equation with a  semilinear  force term if a certain gradient estimate is satisfied at  $t=0$, then it holds for all later times $t>0$. We  then establish analogous  results for  reaction-diffusion equations such as \eqref{e0} below in $\Om \times [0, T]$, where  $\Om$ is an epigraph such that the mean curvature  of   $\partial \Om$ is nonnegative. 

We then turn our attention to settings where such gradient estimates are valid without any a priori information on whether the estimate holds at some earlier time. Quite remarkably (see Theorem \ref{main3}, Theorem \ref{main5} and Theorem \ref{T:ricci}), this is is true for $\Rn \times (-\infty, 0]$ and  $\Om \times  (-\infty, 0]$, where $\Om$ is  an epigraph satisfying  the geometric  assumption  mentioned above, and for $M \times (-\infty,0]$, where $M$ is a connected, compact Riemannian  manifold with nonnegative Ricci tensor. As a  consequence  of the gradient estimate \eqref{mo2}, we  establish a rigidity  result (see Theorem \ref{main6} below) for solutions to \eqref{e0} which is the   analogue  of  Theorem 5.1   in \cite{CGS}. Finally, motivated  by Theorem \ref{main6}, we  close the paper  by proposing a parabolic  version of  the famous conjecture of De Giorgi also known as the $\ve$-version of the Bernstein theorem. 
\end{abstract}

\section{introduction}\label{S:intro}

In his pioneering paper \cite{Mo} L. Modica proved that if $u$ is a (smooth) bounded entire solution of the semilinear Poisson equation $\Delta u = F'(u)$ in $\Rn$, with nonlinearity $F\ge 0$, then $u$ satisfies the a priori gradient bound 
\begin{equation}\label{mo}
|Du|^2 \le 2 F(u).
\end{equation}
With a completely different approach from Modica's original one, this estimate was subsequently extended in \cite{CGS} to nonlinear equations in which the leading operator is modeled either on the $p$-Laplacian $ \operatorname{div}(|Du|^{p-2} Du)$, or on the minimal surface operator $\operatorname{div}((1+|Du|^2)^{-1/2}Du)$, and later to more general integrands of the calculus of variations in \cite{DG}. More recently, in their very interesting paper \cite{FV} Farina and Valdinoci have extended the Modica estimate \eqref{mo} to domains in $\Rn$ which are epigraphs whose boundary has nonnegative mean curvature, and to compact manifolds having nonnegative Ricci tensor, see \cite{FV4}, and also the sequel paper with Sire \cite{FSV}.


It is by now well-known, see \cite{Mo}, \cite{CGS}, \cite{AC}, \cite{DG}, that, besides its independent interest, an estimate such as \eqref{mo} implies Liouville type results, monotonicity properties of the relevant energy and it is also connected to a famous conjecture of De Giorgi (known as the $\ve$-version of the Bernstein theorem) which we discuss at the end of this introduction and in Section \ref{S:dg} below, and which nowadays still constitutes a largely unsolved problem.  

In the present paper we study Modica type gradient estimates for solutions of some nonlinear parabolic equations in $\Rn$ and, more in general, in complete Riemannian manifolds with nonnegative Ricci tensor, and in unbounded domains satisfying the above mentioned geometric assumptions in  \cite{FV}.   
In the first part of the paper we  continue the  study  initiated in the recent work \cite{BG}, where we considered the following  inhomogeneous  variant of the normalized $p$-Laplacian evolution  in $\Rn  \times [0,T]$,
\begin{equation}\label{1}
 |Du|^{2-p}\left\{\operatorname{div}(|Du|^{p-2}Du) - F'(u)\right\} = u_t,\ \ \ \ \  1 < p \leq 2.
\end{equation}
In \cite{BG} we proved that if  a bounded  solution $u$ of \eqref{1} belonging to a certain class $H$ (see \cite{BG} for the relevant definition) satisfies the following gradient estimate at  $t=0$ for a.e. $x \in \Rn$,
\begin{equation}\label{modp}
|Du(x, t)|^{p} \leq \frac{p}{p-1} F(u(x, t)),
\end{equation}
then such estimate continues to hold at any given time  $t > 0$. On the function $F$ we assumed that $F \in C^{2, \beta}_{loc}(\R)$ and $F\ge 0$. These same assumptions will be assumed throughout this whole paper.

In Section \ref{S:modica} we show that a similar result  is true for  the following  inhomogeneous  variant of the minimal surface parabolic equation
\begin{equation}\label{e:p}
(1+ |Du|^{2})^{1/2} \left\{\operatorname{div}\left(\frac{Du}{(1+ |Du|^2)^{1/2}}\right) - F'(u)\right\} = u_t, 
\end{equation}
see Theorem \ref{main1} below. The equation \eqref{e:p} encompasses two  types of equations: when $F(u) = 0$ it represents the equation of motion by mean curvature studied in \cite{EH}, whereas when $u(x,t) = v(x)$, then \eqref{e:p} corresponds to the steady state which is prescribed mean curvature equation.

In Section \ref{S:fv} we establish  similar results for the reaction diffusion equation in $\Om \times [0, T]$ 
\begin{equation}\label{e0}
\Delta u = u_t  + F'(u),
\end{equation}
where now $\Om$ is an epigraph and the mean  curvature of $\partial \Om$ is nonnegative. Theorem \ref{main2} below constitutes the parabolic  counterpart  of  the cited result  in \cite{FV} for the following  problem 
\begin{equation}\label{e1}
\begin{cases}
\Delta u=  F'(u),\  \ \ \text{in}\  \Om,
\\
u=0\ \text{on}\ \ \partial \Om,\  \ \  u\geq 0\ \text{on}\ \Om.
\end{cases}
\end{equation}
In that paper the authors proved that a  bounded  solution $u$  to \eqref{e1}  satisfies the Modica estimate \eqref{mo},
provided that the mean curvature of $\p \Om$ be nonnegative.

In Section  \ref{S:moremods} we  turn our  attention to settings  where  global versions of such estimates  for  solutions to \eqref{e0} can be established, i.e., when there is no a priori information on  whether   such an  estimate  hold at some earlier time $t_0$.  In Theorems \ref{main3} and  \ref{main5} we show that, quite  remarkably, respectively in the case  $\Rn \times (-\infty, 0]$ and $\Om \times (-\infty, 0]$, where  $\Om$ is an epigraph that satisfies   the geometric  assumption mentioned above, the a priori gradient estimate 
\begin{equation}\label{mo2}
|Du(x,t)|^2 \le 2 F(u(x,t))
\end{equation}
holds globally on a bounded solution $u$ of \eqref{e0}.  

In Section \ref{S:ricci} we establish a parabolic generalization of the result in \cite{FV4}, but in the vein of our global results in Section \ref{S:moremods}. In Theorem \ref{T:ricci} we prove that if
$M$ is a  compact  Riemannian manifold  with Ric $\ge 0$, with Laplace-Beltrami $\Delta$, then any bounded entire solution $u$ to \eqref{e0} in $M\times (-\infty,0]$  satisfies \eqref{mo2}. It remains to be seen whether our result, or for that matter the elliptic result in \cite{FV4}, remain valid  when $M$ is only  assumed  to be complete, but not compact.

Finally in Section \ref{S:dg}, as a  consequence of the a priori estimate \eqref{mo2} in Section \ref{S:moremods}, we  establish  an analogue of Theorem 5.1  in \cite{CGS}  for  solutions to \eqref{e0} in $\Rn \times (-\infty, 0]$. More precisely,  in Theorem \ref{main6} below we show that if  the equality in \eqref{mo2} holds at some $(x_0, t_0)$, then there exists a function $g\in C^2(\R)$, $a \in \Rn$, and $\alpha\in \R$, such that
\begin{equation}
u(x, t)= g(<a, x> + \alpha).
\end{equation}
In particular, $u$ is independent of time and the level sets of $u$  are vertical hyperplanes in $\Rn\times (-\infty,0]$. This result suggests a parabolic version of the famous conjecture of De Giorgi  (also known as the $\ve$-version of the Bernstein theorem for minimal graphs) which asserts that entire solutions to
\begin{equation}
\Delta u = u^3 - u,
\end{equation}
such that $|u|\le 1$ and $\frac{\p u}{\p x_n} >0$, must be one-dimensional, i.e., must 
have level sets which are hyperplanes, at least in dimension $n\le 8$, see \cite{dG}. We recall that the conjecture of De Giorgi has been fully solved for $n=2$ in \cite{GG} and $n=3$ in \cite{AC}, and it is known to fail for $n\ge 9$, see \cite{dPKW}. Remarkably, it is still an open question for $4 \le n \le 8$. Additional fundamental progress on De Giorgi's conjecture is contained in the papers \cite{GG2}, \cite{Sa}.  For results  concerning the $p$-Laplacian version of De Giorgi's conjecture, we refer the reader to the interesting paper \cite{SSV}.  For further  results, the  state of art  and  recent  progress on De Giorgi's conjecture, we refer to \cite{CNV}, \cite{FV2}, \cite{FSV1} and the references therein.

In Section \ref{S:dgc} motivated by our Theorem \ref{main6} below, we close the paper by proposing a parabolic version of  De Giorgi's conjecture. It is our hope that it will stimulate interesting further research.

\medskip

\noindent \textbf{Acknowledgment:} The paper was finalized  during the first author's stay at the Institut Mittag-Leffler during the semester-long program \emph{Homogenization and Random Phenomenon}. The first  author would like to thank the Institute and the organizers of the program for the kind hospitality and the  excellent working conditions. We would like to thank Matteo Novaga for kindly bringing to our attention that Conjecture 1 at the end of this paper is violated by the traveling wave solutions in \cite{CGHNR} and \cite{G} and for suggesting the amended Conjecture 2.

\section{Forward Modica  type estimates in $\Rn \times [0, T]$ for the generalized motion by mean curvature equation}\label{S:modica}

In \cite{CGS} it was proved that if $u \in C^{2}(\Rn) \cap L^{\infty}(\Rn)$ is a solution to
\begin{equation}
 \operatorname{div}\left(\frac{Du}{(1+ |Du|^2)^{1/2}}\right)= F'(u),
\end{equation}
such that  $|Du| \leq C$, then the following Modica type gradient estimate holds
\begin{equation}
\frac{(1+ |Du|^2)^{1/2} -1}{(1+ |Du|^2)^{1/2}} \leq F(u). 
\end{equation}
In Theorem \ref{main1} below we generalize this result to the parabolic minimal surface equation \eqref{e:p}.  Such result also provides the counterpart of the above cited main result \eqref{1} in \cite{BG} for the normalized parabolic $p$-Laplacian \eqref{modp}. Henceforth, by $v \in C^{2,1}_{loc}$, we mean that $v$ has continuous derivatives of up to  order two in  the $x$ variable and up to order one in the $t$  variable. We would also  like to mention that  unlike the case when $F=0$,  further requirements on $F$ need to be imposed to  ensure that a  bounded  solution to \eqref{e:p} has bounded gradient, see e.g. Theorem 4 in \cite{LU1}. This is why an $L^{\infty}$  gradient bound  is assumed in the hypothesis of the next theorem.
 
We recall that throughout the whole paper we assume that $F \in C^{2, \beta}_{loc}(\R)$ for some $\beta > 0$,  and that $F\ge 0$. 

\begin{thrm}\label{main1}
For a given $\ve> 0$, let $u \in C^{2,1}_{loc}(\Rn \times [0, T]) \cap L^{\infty}(\Rn \times (-\ve, T])$  be a classical solution to \eqref{e:p} in $\Rn \times [0,T]$ such that $|Du| \leq C$. If $u$ satisfies the following  gradient estimate
\begin{equation}\label{est1}
\frac{(1+ |Du|^2)^{1/2} -1}{(1+ |Du|^2)^{1/2}} \leq F(u) 
\end{equation}
at $t=0$, then $u$ satisfies  \eqref{est1} for all  $t>0$.
\end{thrm}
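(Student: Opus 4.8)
The plan is to introduce the auxiliary function
\[
P(x,t) = \frac{(1+|Du|^2)^{1/2}-1}{(1+|Du|^2)^{1/2}} - F(u),
\]
equivalently $P = 1 - (1+|Du|^2)^{-1/2} - F(u)$, and show that $P\le 0$ everywhere on $\Rn\times[0,T]$ given that $P\le 0$ at $t=0$. The natural mechanism is a parabolic maximum principle: one derives a differential inequality of the form $\mathcal L P \ge \langle b, DP\rangle$ (modulo a term that has a favorable sign where $P$ attains a positive interior maximum), where $\mathcal L$ is the linearized operator naturally associated to \eqref{e:p}, i.e. $\mathcal L w = a_{ij}(Du) w_{ij} - w_t + (\text{lower order})$ with $a_{ij}(Du) = (1+|Du|^2)^{-1/2}\big(\delta_{ij} - \frac{D_iu\,D_ju}{1+|Du|^2}\big)$. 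Since the gradient is bounded, $\mathcal L$ is uniformly parabolic with smooth (actually $C^{1,\beta}_{loc}$, using $F\in C^{2,\beta}_{loc}$) coefficients, so the classical strong maximum principle and Hopf-type arguments apply.

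The computational heart is the Bochner-type identity for $P$. First I would differentiate the equation \eqref{e:p}: writing it as $\operatorname{div}\!\big(Du/(1+|Du|^2)^{1/2}\big) = F'(u) + (1+|Du|^2)^{-1/2}u_t$ and differentiating in $x_k$, then contracting with $D_k u$, I get an evolution equation for $|Du|^2$ (hence for the quantity $\varphi := (1+|Du|^2)^{1/2}$). Separately $F(u)$ satisfies $\mathcal L_0 F(u) = F'(u)\,\mathcal L_0 u + F''(u)|Du|^2_{a}$ where $|Du|^2_a = a_{ij}D_iu D_ju$ and $\mathcal L_0$ is the divergence-form part. Combining, and crucially invoking the hypothesis $P\le 0$ (which at a would-be positive maximum point gives $F(u) \le$ the gradient term, with equality structure controlling the sign), one shows that at an interior positive maximum of $P$ the second-order terms force a contradiction — this is exactly the strategy used for \eqref{1} in \cite{BG} and for the elliptic minimal surface case in \cite{CGS}, and I would follow that template. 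The key algebraic point, as in \cite{CGS}, is that the ``bad'' term involving $F''(u)$ and second derivatives of $u$ can be absorbed because along the set $\{P=0\}$ one has the pointwise relation $F'(u)^2 \sim$ (a positive multiple of) the relevant combination of $|D^2u|^2$ and $|Du|^2$ terms, coming from the Kato-type inequality $|D^2u|^2 \ge |D|Du||^2$ and the equation itself.

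The technical obstacle is twofold. First, the domain $\Rn\times[0,T]$ is unbounded, so the maximum principle is not immediate; I would handle this either by a standard localization with cutoff functions and the bound $|Du|\le C$, $\|u\|_{L^\infty}<\infty$ (so $F(u)$ is bounded, $P$ is bounded), deriving that $\sup P$ is either attained or approached in the limit with the differential inequality surviving, or by the Phragmén–Lindelöf-type device of working with $P - \delta(1+|x|^2) - \delta t$ and letting $\delta\to 0$. Second, and this is the step I expect to be the real work, is making the Bochner computation rigorous at points where $Du = 0$: there $\varphi$ is smooth but $|Du|$ is only Lipschitz, so the identity for $|D^2u|^2$ versus $|D|Du||^2$ degenerates; the standard fix (as in \cite{CGS}, \cite{BG}) is to note that where $P>0$ one has $F(u)>0$ hence, if $Du=0$ there, $P = -F(u)<0$, a contradiction — so $Du\ne 0$ on $\{P>0\}$ and the computation is legitimate precisely where it is needed. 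The extra hypothesis that $u\in L^\infty(\Rn\times(-\ve,T])$ is used to get, via interior parabolic estimates, the bounds on $D^2u$ and $u_t$ needed to justify differentiating the equation and to run the localization near $t=0$.
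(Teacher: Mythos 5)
Your plan matches the paper's proof essentially step for step: the paper introduces the same $P$ (up to a factor of $2$, via $\xi(s)=2s\phi'(s)-\phi(s)$), differentiates the non-divergence form of \eqref{e:p}, and works with the auxiliary function $w_R = P - \frac{M}{R}\sqrt{|x|^2+1} - \frac{ct}{R^{1/2}}$ — a close relative of your Phragm\'en--Lindel\"of device — showing that at a would-be interior positive maximum of $w_R$ exceeding the threshold $R^{-1/2}$ the linearized operator $(d_{ij}w_i)_j$ is strictly positive, a contradiction, and then letting $R\to\infty$. The two observations you flag as the real work — that $P>0$ forces $Du\neq 0$ so the computation is legitimate where needed, and that the hypothesis $u\in L^\infty(\Rn\times(-\ve,T])$ together with $|Du|\le C$ gives the $H_{3+\alpha}$ regularity via interior Schauder estimates needed to differentiate the equation — are precisely the ingredients the paper uses.
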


\begin{proof}
Since $|Du| \leq C$ and $F \in C^{2,\beta}_{loc}$, it follows from the Schauder regularity theory of  uniformly parabolic  non-divergence  equations (see Chapters 4 and 5 in \cite{Li}), that $u \in H_{3+\alpha}(\Rn \times [0, T])$  for some  $\alpha>0$ which depends on $\beta$ and the bounds on $u$ and $Du$ (see Chapter 4 in \cite{Li} for the relevant notion). Now  we let 
\begin{equation}\label{def}
\phi (s)= (s^2 + 1)^{1/2},\ \ \ \ \ \ s\in \R.
\end{equation}
With this notation we have that  $u$ is a classical solution to 
\begin{equation}\label{approx}
\operatorname{div} (\phi' (|Du|^{2}) Du)= \phi' (|Du|^{2}) u_t + F'(u)
\end{equation}
Now  given that $u \in H_{3+\alpha}(\Rn \times  [0, T])$,   one can repeat the arguments as in the proof of Theorem 5.1 in \cite{BG}  with $\phi$ as in \eqref{def}. We nevertheless provide the details for the sake of completeness and also because the corresponding  growth of $\phi$ in $s$ is quite different from  the one in Theorem 5.1 in \cite{BG}. Let
\begin{equation}
\xi(s)= 2s \phi'(s) - \phi(s),
\end{equation}
and define $\Lambda = \xi'$. We  also define $P$ as follows
\begin{equation}\label{P:1}
P(u, x,t) = \xi(|Du(x,t)|^{2}) - 2 F(u(x,t)).
\end{equation}
With $\phi$ as in \eqref{def} above, we have that 
\begin{equation}
P= 2\frac{(1+ |Du|^2)^{1/2} -1}{(1+ |Du|^2)^{1/2}} - 2F(u).
\end{equation}
We note that the hypothesis that \eqref{est1} be valid at $t=0$ can be reformulated by saying that $P(\cdot,0) \leq 0$. We next write  \eqref{approx} in the following manner
\begin{equation*}
a_{ij} (Du)\ u_{ij}= f(u) + \phi'\  u_{t},
\end{equation*}
where  for $\sigma \in \Rn$ we have let
\begin{equation}\label{nondiv}
a_{ij}(\sigma)= 2\phi'' \sigma_{i}\ \sigma_{j} + \phi' \delta_{ij}.
\end{equation}
Therefore, $u$ satisfies
\begin{equation}\label{e:650}
d_{ij}\  u_{ij} = \frac{f}{\Lambda} + \frac{\phi'}{\Lambda} u_{t},
\end{equation}
where $d_{ij}= \frac{a_{ij}}{\Lambda}$.
By differentiating \eqref{nondiv} with respect to $x_{k}$, we obtain
\begin{equation}\label{f:135}
(a_{ij}\ (u_k)_{i})_{j}= f'\ u_k  + \phi' \ u_{tk} + 2 \phi''\ u_{hk}\ u_{h}\ u_{t}.
\end{equation}
From the definition of $P$  in \eqref{P:1} we have,
\begin{equation}\label{P1}
P_{i} = 2 \Lambda u_{ki}\  u_{k} - 2 f\ u_{i},\ \ \ P_{t} = 2 \Lambda u_{kt}\ u_{k} - 2f\  u_{t}.
\end{equation}

We  now consider the following auxiliary function
\begin{equation*}
w = w_R = P -  \frac{M}{R} \sqrt{|x|^2 + 1} -   \frac{ct}{R^{1/2}},
\end{equation*}
where $R > 1$ and  $M$, $c$ are to be  determined subsequently. 
Note that $P \ge w$ for $t\ge 0$.
Consider the cylinder  $ Q_{R}=  B( 0 , R) \times  [0, T]$. One can see that  if $M$  is   chosen large enough,  depending on the $L^{\infty}$ norm of $u$ and its first derivatives, then $ w < 0 $ on the lateral boundary of $Q_{R}$. In this situation we see that if $w$ has a strictly positive maximum at a point $(x_0, t_0)$,
then such point cannot be on the parabolic boundary of $Q_R$. In fact, since $w<0$ on the lateral boundary, the point cannot be on such set. But it cannot be on the bottom of the cylinder either since, in view of \eqref{est1}, at $t=0$ we have  $ w(\cdot,0) \leq P(u(\cdot,0)) \leq 0$.

Our objective is to prove the following claim: 
\begin{equation}\label{claim}
w \le K \overset{def}{=} R^{- \frac 12},\ \ \ \ \text{in}\ Q_R,
\end{equation}
provided that $M$ and $c$ are chosen appropriately.
This claim will be established in \eqref{claim2} below. 
We  first  fix  a point  $(y, s)$ in $\Rn$. Now for all  $R$ sufficiently  large enough, we  have that  $(y, s) \in Q_{R}$. We would like to emphasize over here that  finally we  let $R \to \infty$.  Therefore, once \eqref{claim} is established, we obtain from it  and the definition of $w$ that
\begin{equation}\label{e:68}
P(u,y, s) \leq  \frac{K'}{R^{1/2}},
\end{equation}
where $K'$ depends on $\ve, (y,s)$ and the bounds of the derivatives of $u$  of order three. By letting  $ R \to \infty$ in \eqref{e:68}, we find  that  
\begin{equation}\label{est}
P(u,y, s) \leq 0. 
\end{equation}
The sought for conclusion thus follows  from the arbitrariness of the point $(y, s)$.

In order to prove the claim \eqref{claim} we argue by contradiction and suppose that there exist $(x_0,t_0)\in \overline Q_{R}$ at which $w$ attains it maximum and for which 
\begin{equation*}
w(x_0, t_0) > K.
\end{equation*}

It follows that at $(x_0,t_0)$ we must have
\begin{equation}\label{expl}
 (\ve^2 + |Du(x_0,t_0)|^2 )^{-1/2} |Du(x_0, t_0)|^2\geq  \frac{1}{2} P (x_0, t_0) \geq \frac{1}{2} w (x_0, t_0)>  \frac{1}{2} K,
\end{equation}
which implies, in particular, that $Du(x_0, t_0) \neq 0$. Therefore, we obtain from \eqref{expl}
\begin{equation}\label{est5}
|Du(x_0,t_0)| \geq  ( 1 + |Du(x_0,t_0)|^2 )^{ - 1/2} |Du(x_0,t_0)|^2 \geq  \frac{1}{2} P (x_0, t_0) >  \frac{1}{2} K.
\end{equation}
On the other hand, since $(x_0,t_0)$ does not belong to the parabolic boundary, from the hypothesis that $w$ has its maximum at such point, we conclude that $w_t(x_0,t_0) \ge 0$ and $Dw(x_0,t_0) = 0$. These conditions translate into
\begin{equation}\label{M1}
P_{t} \geq  \frac{c}{R^{1/2}},
\end{equation} 
and
\begin{equation}\label{M2}
P_{i} =  \frac{M} {R} \frac{x_{0,i}}{ (|x_{0}|^2 + 1 )^{1/2}}.
\end{equation}
Now
\begin{equation*}
(d_{ij}w_{i})_{j} =   (d_{ij} P_{i})_{j} - \frac{M}{R} (d_{ij}  \frac{x_{i}}{ (|x|^2 + 1 )^{1/2}})_{j},
\end{equation*}
where
\begin{equation}\label{calc1}
(d_{ij}P_{i})_{j} = 2 ( \frac{ a_{ij}}{\Lambda} ( \Lambda u_{ki}\ u_{k} - f\ u_{i}))_{j} = 2( a_{ij}\ (u_{k})_{i}\  u_{k})_{j} - 2 (f\ d_{ij}\ u_{i})_{j}.
\end{equation}
After a simplification, \eqref{calc1} equals
\begin{equation*}
2 a_{ij}\  (u_{ki})_{j}\ u_{k} + 2 a_{ij}\ u_{ki}\ u_{kj} - 2 f'\ d_{ij}\  u_{i}\ u_{j} - 2 f\  d_{ij}\ u_{ij} - 2f\ (d_{ij})_{j}\ u_{i}.
\end{equation*}
We notice that
\begin{equation*}
d_{ij}u_{i}u_{j} = \frac{ 2 \phi^{''} u_{i}\ u_{j}\ u_{i}\ u_{j}  + \phi'\ \delta_{ij}\ u_{i}\ u_{j}}{\Lambda} = |Du|^2.
\end{equation*}
Now by using \eqref{f:135} and by  cancelling  the  term  $2f'|Du|^2$, we get that the right-hand side in \eqref{calc1} equals
\begin{equation*}
{\ds 2 \phi' u_{tk}\ u_{k}  + 4 \phi^{''}\ u_{hk}\ u_{h}\ u_{k}u_{t} + 2 a_{ij}\ u_{ki}\ u_{kj} - 2 f d_{ij}\ u_{ij} - 2 f d_{ij, j}\  u_{i}}.
\end{equation*}
 Therefore by using the equation \eqref{e:650},  we obtain 
\begin{align}\label{e:100}
(d_{ij}P_{i})_{j} & = 2 a_{ij}\ u_{ki}\ u_{kj}  +  2\phi^{'}\ u_{tk}\ u_{k} + 4 \phi^{''}\ u_{hk}\ u_{h}\ u_{k}\ u_{t}
\\   
& -  2 \frac{f^2}{\Lambda}  - 2 \frac{ f\ \phi^{'}\ u_t}{\Lambda} - 2fd_{ij,j}\ u_{i}.
\notag
\end{align}
By using the extrema conditions \eqref{M1}, \eqref{M2}, we have the following two conditions at $(x_0, t_0)$ 
\begin{equation}\label{M3}
u_{kh}\ u_{k}\ u_{h} = \frac{f}{\Lambda} |Du|^{2} +  \frac{M}{2 R \Lambda} \frac{x_{h}\  u_{h}}{(|x|^2 + 1)^{1/2}},
\end{equation}
\begin{equation}\label{M4}
2 \Lambda\  u_{kt}\ u_{k}  \geq   2 f u_{t}  + \frac{c}{R^{1/2}}.
\end{equation}
Using the extrema conditions   and by canceling $ 2 \phi^{'} u_{tk}u_{k} $ we  obtain,
\begin{align}\label{11}
(d_{ij} w_{i})_{j}   \geq   & 2a_{ij}\ u_{ki}\ u_{kj}  +   \frac{4 \phi^{''}\ f}{\Lambda} |Du|^{2} u_{t}  -  \frac{2 f^{2}}{\Lambda} - 2 f d_{ij, j}\ u_{i}
\\
 + &  \frac{2 \phi^{''}\ M\ x_h\ u_h\ u_t}{R\ \Lambda\ (|x|^2 + 1)^{1/2}} + \frac{c\ \phi'}{R^{1/2} \Lambda}  -  \frac{M}{R} ( d_{ij} \frac{x_{i}}{ (|x|^2 + 1)^{1/2}})_{j}.
 \notag
 \end{align}
Now  we  have the following  structure equation, whose proof is lengthy but straightforward,
\begin{equation}\label{e:100}
d_{ij, j } u_i = \frac{2 \phi^{''}} {\Lambda} ( |Du|^{2} \Delta u - u_{hk}\ u_h\  u_k).
\end{equation}
Using \eqref{M4} in \eqref{e:100}, we  find
\begin{equation*}
d_{ij, i}\ u_i= \frac{2 \phi^{''} |Du|^2}{\Lambda} (  \Delta u -   \frac{f}{\Lambda} -  \frac{M\ x_h\  u_h}{ 2 R\ |Du|^2\   \Lambda (|x|^2 + 1 )^{1/2}}).
\end{equation*}
Using the equation \eqref{approx},  we have 
\begin{equation*}
2 \phi^{''}\ u_{hk}\ u_h\  u_k + \phi'\  \Delta u  = f + \phi'\  u_t.
\end{equation*}
 Therefore,
\begin{equation}\label{M100}
\Delta\  u = \frac{ f + \phi'\  u_t - 2 \phi^{''}\  u_{hk}\ u_h\  u_k}{\phi'}.
\end{equation}
Substituting the value for $\Delta u$ in \eqref{M100}  and by using the extrema condition \eqref{M4}, we have the following equality  at $(x_0, t_0)$,
\begin{align}\label{e:60}
d_{ij, j}\ u_i  &  = \frac{2 \phi^{''}\ |Du|^2} {\Lambda\ \phi' }\bigg[ f + u_t\ \phi'  - 2 \phi^{''}\ \frac{|Du|^2}{\Lambda} f  - f  \frac{\phi'}{\Lambda}
\\ 
 &  - \frac{  \phi^{''}\ M\ x_h\  u_h } { R \Lambda (|x|^2 + 1 )^{1/2}}-  \frac{M\ x_h\  u_h\ \phi'}{  2 R\ |Du|^2\  \Lambda\  (|x|^2 + 1 )^{1/2}}\bigg].
 \notag
 \end{align}
Using the definition of $\Lambda$  and cancelling terms in \eqref{e:60}, we have that  the right-hand side  in \eqref{e:60}  equals
\begin{equation}\label{e:61}
2 \phi^{''}\frac{ |Du|^2 u_t }{\Lambda} -   \frac{ \phi^{''} M\ x_h\ u_h }{ \Lambda^2\  R\ (|x|^ 2 + 1 )^{1/2}}  -    \frac{ 2 (\phi^{''})^2\ |Du|^2\ M\ x_h\  u_h } {  R\ \Lambda^2\ \phi^{'}\  (|x|^2 + 1 )^{1/2}}. 
\end{equation}
Therefore, by  canceling the terms $4 \phi^{''} f \frac{|Du|^ 2 u_t }{ \Lambda}$ in \eqref{11}, we  obtain the following differential inequality at $(x_0, t_0)$,
\begin{align}\label{e:62}
(d_{ij}w_{i})_{j} \ge &  \frac{c\ \phi'} {  R^{1/2}\ \Lambda} -   \frac{2\ f^2}{\Lambda}   - \frac{M}{R}\  ( d_{ij}\ \frac{x_{i}}{ (|x|^2 + 1)^{1/2}})_{j}  +    \frac{2 \phi^{''}\ M\ x_h\ u_h\ u_t}{R\ \Lambda (|x|^2 + 1)^{1/2}} 
\\  
& + \frac{2 f\ \phi^{''} M\ x_h\ u_h }{ \Lambda^2\ R\ (|x|^ 2 + 1 )^{1/2}} +   \frac{4 f\ (\phi^{''})^2\ |Du|^2\ M\ x_h\  u_h } {R\ \Lambda^2\ \phi^{'}\ (|x|^2 + 1 )^{1/2}} + 2 a_{ij}\ u_{ki}\ u_{kj}.
\notag
\end{align}

Now by using the identity  for $ DP$ in \eqref{P1} above, we have
\begin{equation}\label{e:63}
{\ds u_{ki}\ u_{kj}\ u_{i}\ u_{j}= \frac{( P_k + 2 fu_k ) ^{2}} { 4  \Lambda ^ {2} }}.
\end{equation}
Also,
\begin{equation*}
a_{ij}\  u_{kj}\  u_{ki}  = \phi'\  u_{ik}\ u_{ik}  + 2 \phi^{''}\ u_{ik}\ u_{i}\ u_{jk}\ u_{j}.
\end{equation*}
Therefore, by Schwarz  inequality, we have
\begin{equation*}
a_{ij}\ u_{kj}\ u_{ki} \geq  \phi'  \frac{ u_{ik}\ u_{jk}\  u_i\  u_ j}{  |Du|^2 } + 2 \phi^{''}\  u_{ik}\ u_i\  u_{jk}\ u_{j}   = \frac{\Lambda u_{ik}\ u_{i}\ u_{jk}\ u_{j}}{|Du|^2}.
\end{equation*}
Then, by using \eqref{e:63}  we find
\begin{equation}\label{e:64}
a_{ij}\ u_{kj}\ u_{ki} \geq \frac{(P_{k} + 2fu_k)^{2}}{4 \Lambda |Du|^{2}} =  \frac{  |DP|^2 + 4 f^2 |Du|^2 + 2 f < Du,DP> }{  4 |Du|^ 2 \Lambda }.
\end{equation}
At this point, using \eqref{e:64} in \eqref{e:62}, we can cancel off $\frac{2f^2}{\Lambda}$ and  consequently obtain the following inequality at  $(x_0, t_0)$,
\begin{align}\label{e:65}
(d_{ij}w_{i})_{j} \geq & \frac{c \phi'} {  R^{1/2} \Lambda}   +   \frac{ f < Du, DP> }{  |Du|^2 \Lambda }   - \frac{M}{R}\ ( d_{ij}\ \frac{x_{i}}{ (|x|^2 + 1)^{1/2}})_{j} + \frac{2\ \phi^{''}\ M\ x_h\ u_h\  u_t}{R\ \Lambda (|x|^2 + 1)^{1/2}}  
\\
& + \frac{4 f\ (\phi^{''})^{2}\ |Du|^{2} M\ x_h\  u_h } {   R\ \Lambda^{2}\ \phi' (|x|^2 + 1 )^{1/2}}   +  \frac{2 f\ \phi^{''} M\ x_h u_h }{ \Lambda^2\ R\ (|x|^ 2 + 1 )^{1/2}}.
\notag
\end{align}
By assumption, since  $ w(x_0, t_0) \geq K$, we have that 
\begin{equation*}
 |D u |  \geq \frac{1}{2 R^{1/2}}.
\end{equation*}
Moreover, since  $u$ has  bounded derivatives of  upto order 3, for a fixed $\ve > 0$, we  have that    $\phi' $ and  $\Lambda$ are  bounded from below by a positive constant. Therefore by \eqref{M2},  the term $ \frac{ f < Du, DP> }{  |Du|^2 \Lambda }$ can be controlled from below by   $ - \frac{ M^{''}}{ R^{1/2}}$ where $M^{''}$ depends on $\ve$ and  the bounds of the derivatives of $u$. Consequently, from \eqref{e:65}, we have at $(x_0, t_0)$, 
\begin{equation}
(d_{ij}w_{i})_{j} \geq     \frac{C(c)} { R^{1/2}} -   \frac{L(M)} {R} -  \frac{ M''}{ R^{1/2}}.
\end{equation}
Now in the very first place, if  $c$ is chosen large enough  depending only on $\ve$ and  the bounds of the derivatives of $u$ up to order three,  we would have the following inequality $ \text{at}\ (x_0, t_0)$, 
\begin{equation*}
(d_{ij}w_{i})_{j}   > 0.
\end{equation*}
This contradicts the fact that $w$ has a maximum at  $(x_{0}, t_{0})$.
Therefore, either  $ w (x_{0}, t_{0}) < K$, or    the maximum of $w$  is achieved on the parabolic boundary where $w < 0$.  In either case, for an arbitrary point $(y,s)$ such that $|y| \leq R$,  we  have that 
\begin{equation}\label{claim2}
w(y, s) \leq \frac{1} {R^{1/2}}.
\end{equation}

\end{proof}

\section{Forward gradient  bounds for the reaction-diffusion equation \eqref{e0} in epigraphs}\label{S:fv}

In this section we consider Modica type gradient  bounds for  solutions to the parabolic equation \eqref{e0} in unbounded generalized cylinders of the type $\Om \times [0, T]$. On the ground domain $\Om\subset \Rn$  we assume that it be an epigraph, i.e., that
\begin{equation}\label{l1}
\Om= \{(x', x_n)\in \Rn\mid x'\in \R^{n-1},\  x_n > h(x')\}.
\end{equation}
Furthermore, we assume that $h \in C^{2, \alpha}_{loc}(\R^{n-1})$ and  that
\begin{equation}\label{l2}
||Dh||_{C^{1, \alpha}(\R^{n-1}) } < \infty.
\end{equation}

Before proving the main result of the section we establish a lemma which will be used throughout the rest of the paper.

\begin{lemma}\label{L1}
Let $u$ be a solution to \eqref{e0}, and assume that
\begin{equation}
\underset{G}{\inf}\ |Du| > 0,
\end{equation}
for some open set $G \in \Rn \times \R$. Define
\begin{equation}\label{h1}
P(x,t) \overset{def}{=} P(u,x,t)= |Du(x,t)|^{2}- 2F(u(x,t)).
\end{equation}
Then, we have in $G$ that 
\begin{equation}
(\Delta - \partial_t) P + <B,DP>\ \geq\ \frac{|DP|^2}{2 |Du|^2},
\end{equation}
where $B= \frac{2 F'(u) Du} {|Du|^2}$.
\end{lemma}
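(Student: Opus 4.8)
The plan is to compute $(\Delta-\partial_t)P$ directly by differentiating the definition $P=|Du|^2-2F(u)$ twice in space and once in time, using the equation \eqref{e0} in the form $\Delta u = u_t + F'(u)$ together with its differentiated version. First I would record the elementary identities
\[
DP = 2 u_{k}\,Du_k - 2F'(u)\,Du,\qquad \partial_t P = 2 u_k\,u_{kt} - 2F'(u)\,u_t,
\]
and
\[
\Delta P = 2|D^2u|^2 + 2 u_k\,\partial_k(\Delta u) - 2F''(u)|Du|^2 - 2F'(u)\Delta u,
\]
where $|D^2u|^2 = u_{ij}u_{ij}$. Differentiating \eqref{e0} in $x_k$ gives $\partial_k(\Delta u) = u_{tk} + F''(u)u_k$, so the terms $2u_k u_{tk}$ appearing in $\Delta P$ and in $\partial_t P$ will match up, and the $2F''(u)|Du|^2$ terms cancel, leaving
\[
(\Delta - \partial_t)P = 2|D^2u|^2 - 2F'(u)\,\Delta u + 2F'(u)\,u_t = 2|D^2u|^2 - 2F'(u)^2,
\]
using \eqref{e0} once more to replace $\Delta u - u_t$ by $F'(u)$.

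Next I would produce the drift term. Since $\langle B, DP\rangle = \frac{2F'(u)}{|Du|^2}\langle Du, DP\rangle$ and $\langle Du, DP\rangle = 2 u_k u_j u_{jk} - 2F'(u)|Du|^2$, the quantity $\langle B,DP\rangle$ contributes $\frac{4F'(u)}{|Du|^2}u_k u_j u_{jk} - 4F'(u)^2$. Hence
\[
(\Delta-\partial_t)P + \langle B,DP\rangle = 2|D^2u|^2 + \frac{4F'(u)}{|Du|^2}u_ku_ju_{jk} - 2F'(u)^2.
\]
The final step is the pointwise algebraic inequality bounding this below by $\dfrac{|DP|^2}{2|Du|^2}$. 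Writing $DP_k = 2u_ju_{jk} - 2F'(u)u_k$, one has $|DP|^2 = 4\sum_k\big(u_ju_{jk}-F'(u)u_k\big)^2 = 4|D^2u\,Du|^2 - 8F'(u)u_ku_ju_{jk} + 4F'(u)^2|Du|^2$ (here $|D^2u\,Du|^2 = \sum_k(u_{jk}u_j)^2$). Dividing by $2|Du|^2$ and comparing, the claimed inequality reduces to
\[
2|D^2u|^2 + \frac{4F'(u)}{|Du|^2}u_ku_ju_{jk} - 2F'(u)^2 \ \ge\ \frac{2|D^2u\,Du|^2}{|Du|^2} - \frac{4F'(u)u_ku_ju_{jk}}{|Du|^2} + 2F'(u)^2,
\]
i.e.
\[
|D^2u|^2 - \frac{|D^2u\,Du|^2}{|Du|^2} \ \ge\ 2F'(u)^2 - \frac{4F'(u)u_ku_ju_{jk}}{|Du|^2}.
\]
I expect the main (though still routine) obstacle to be packaging this last inequality cleanly: the left side is nonnegative by Cauchy–Schwarz (it is $|D^2u|^2$ minus the squared norm of the projection of $D^2u$ onto the direction $Du/|Du|$), so it suffices to show the right side is controlled, which follows from a further application of Cauchy–Schwarz in the form $\big(u_ku_ju_{jk} - F'(u)|Du|^2\big)^2 \le |Du|^2\big(|D^2u\,Du|^2 - 2F'(u)u_ku_ju_{jk} + F'(u)^2|Du|^2\big)$ — equivalently, one keeps the full square $\tfrac14|DP|^2/|Du|^2$ on the right and estimates $a_{ij}$-type terms by Schwarz exactly as in the proof of Theorem \ref{main1}. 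I would organize the computation so that the square $(P_k + 2F'(u)u_k)^2/(4|Du|^2)$ is retained intact, mirroring \eqref{e:63}–\eqref{e:64}, which avoids any sign bookkeeping and yields the stated bound directly.
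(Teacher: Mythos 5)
Your strategy---compute $(\Delta-\partial_t)P = 2\|D^2u\|^2 - 2F'(u)^2$ and then apply Cauchy--Schwarz to the identity $DP + 2F'(u)Du = 2D^2u\,Du$---is precisely the paper's. But the execution has a slip and a more serious obstruction. The slip: with your (correct) formula $\langle B, DP\rangle = \frac{4F'(u)}{|Du|^2}u_ku_ju_{jk} - 4F'(u)^2$, the sum is
\[
(\Delta-\partial_t)P + \langle B, DP\rangle = 2\|D^2u\|^2 + \frac{4F'(u)}{|Du|^2}u_ku_ju_{jk} - 6F'(u)^2,
\]
not $-2F'(u)^2$. More seriously, the pointwise inequality you reduce to, namely
\[
\|D^2u\|^2 - \frac{|D^2u\,Du|^2}{|Du|^2} \ \geq\ 2F'(u)^2 - \frac{4F'(u)u_ku_ju_{jk}}{|Du|^2},
\]
is false: at a point where $D^2u\,Du = 0$ the right side equals $2F'(u)^2$, while the left side equals $\|D^2u\|^2$, which can be made arbitrarily small while remaining compatible with \eqref{e0} because the equation only constrains $\operatorname{tr}(D^2u) - u_t$ and $u_t$ is free. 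No further Cauchy--Schwarz bridges this; your closing paragraph senses the difficulty but the alternative you gesture at does not prove the displayed inequality.

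The root cause is that the sign on the drift $B$ in the statement of the lemma is off, and your proposal is trying to prove an inequality that does not hold. Carrying out the computation you outline cleanly---expand $|DP + 2F'(u)Du|^2 = 4|D^2u\,Du|^2 \leq 4\|D^2u\|^2|Du|^2$, divide by $2|Du|^2$, and substitute $2\|D^2u\|^2 = (\Delta-\partial_t)P + 2F'(u)^2$ so the $2F'(u)^2$ terms cancel---yields
\[
(\Delta-\partial_t)P \ \geq\ \frac{|DP|^2}{2|Du|^2} + \langle B, DP\rangle, \qquad\text{i.e.}\qquad (\Delta-\partial_t)P - \langle B, DP\rangle \ \geq\ \frac{|DP|^2}{2|Du|^2},
\]
with the drift $-B$ rather than $+B$. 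This is in fact what the paper's own proof produces when one actually performs the ``replacing in \eqref{h10}'' step, so the discrepancy is a typo in the statement of the lemma, and it is harmless to the subsequent applications since only boundedness (not the sign) of the first-order coefficient is used for the strong maximum principle and the Hopf lemma. You should reorganize your argument along these lines and report the corrected sign rather than attempt to establish the $+B$ version, which cannot be done.
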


\begin{proof}
The  proof of the lemma   follows from computations similar  to that in the proof of Theorem \ref{main1}, but we nevertheless provide details since this lemma will be crucially used in the rest of the paper. We first note that, since $F \in C^{2, \beta}_{loc}$, we have $u \in H_{3+\alpha,loc}$  for some $ \alpha$ which also depends on $\beta$. By using \eqref{e0}, it follows from a  simple computation that
\begin{equation}\label{h10}
(\Delta - \partial_t) P= 2 ||D^{2} u||^2 - 2 F'(u)^2.
\end{equation}
From the definition of $P$, it follows that
\[
DP=  2 D^{2} u Du - 2F'(u) Du.
\]
This gives
\[
4 |D^2 u Du|^2 = |DP + 2 F'(u) Du|^2 = |DP|^2 + 4 F'(u)^2 |Du|^2 + 4 F'(u)<DP,Du>.
\]
Therefore, from Cauchy-Schwartz inequality  we obtain
\[
4 ||D^2 u||^{2} |Du|^2\geq |DP|^2 + 4 F'(u)^2 |Du|^2 + 4 F'(u) <DP,Du>.
\]
By dividing both sides of this inequality by $2 |Du|^2$, and replacing in \eqref{h10}, the desired  conclusion follows.

\end{proof}

We now  state the  relevant  result which is the parabolic analogue of Theorem 1 in \cite{FV}.
\begin{thrm}\label{main2}
Let $\Om\subset \Rn$  be as in \eqref{l1}, with $h$ satisfying \eqref{l2}, and assume furthermore that the mean curvature of $\partial \Om$ be nonnegative. Let  $u$ be a nonnegative  bounded  solution to the following problem
\begin{equation}\label{l3}
\begin{cases}
\Delta u = u_t + F'(u),
\\
u=0\ \text{on}\ \partial \Om \times [0,T],
\end{cases}
\end{equation}
such that
\begin{equation}\label{est2}
|Du|^{2} (x,0) \leq 2 F(u) (x,0).
\end{equation}
Furthermore, assume that $||u(\cdot,0)||_{C^{1, \alpha}(\overline{\Om})} < \infty$.  Then,  the following gradient estimate holds for all $t>0$ and all $x\in \Om$,
\begin{equation}\label{est22}
|Du|^{2} (x,t) \leq 2 F(u) (x,t).
\end{equation}
\end{thrm}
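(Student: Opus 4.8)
The plan is to adapt the maximum-principle argument of Theorem \ref{main1} to the cylinder $\Om\times[0,T]$, using Lemma \ref{L1} as the basic differential inequality for $P$, and to handle the new lateral boundary $\p\Om\times[0,T]$ via the geometric hypothesis that $\p\Om$ has nonnegative mean curvature — this is precisely the point where the Farina--Valdinoci idea enters. First I would reduce to the region where $Du\neq 0$: where $Du=0$ the inequality \eqref{est22} is automatic since $F\ge 0$, so it suffices to argue on the open set $G=\{|Du|>0\}$. On $G$, Lemma \ref{L1} gives
\[
(\Delta-\p_t)P + \langle B,DP\rangle \ \ge\ \frac{|DP|^2}{2|Du|^2},\qquad B=\frac{2F'(u)Du}{|Du|^2},
\]
and in particular $P$ is a subsolution of a linear uniformly parabolic operator (the coefficients are bounded because $u\in H_{3+\alpha,loc}$ and $|Du|$ is bounded below on $G$). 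I would then run the same barrier argument as in Theorem \ref{main1}: fix $(y,s)\in\Om\times(0,T]$, work in $Q_R=(\Om\cap B(0,R))\times[0,T]$, introduce $w=w_R=P-\frac{M}{R}\sqrt{|x|^2+1}-\frac{ct}{R^{1/2}}$, observe $w(\cdot,0)\le P(\cdot,0)\le 0$ by \eqref{est2} and $w<0$ on the spherical part $(\Om\cap\p B(0,R))\times[0,T]$ for $M$ large, so a strictly positive interior maximum of $w$ (in $Q_R$, away from $\p\Om$) at $(x_0,t_0)$ leads, exactly as before, to $(\Delta-\p_t)w>0$ at $(x_0,t_0)$ — a contradiction — giving $w\le R^{-1/2}$ and hence $P(y,s)\le 0$ after letting $R\to\infty$.

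The genuinely new step is the lateral boundary $\p\Om\times[0,T]$, where $|Du|$ need not be bounded below, so the linear argument above does not directly apply and $w$ could a priori achieve its maximum there. Here I would use the Hopf-type computation of Farina--Valdinoci: since $u=0$ on $\p\Om$, the tangential derivatives of $u$ vanish on $\p\Om$, so $Du=u_\nu\,\nu$ there (with $\nu$ the interior unit normal), and along $\p\Om$ one has $u_t=0$ too, whence the equation \eqref{e0} reads $\Delta u=F'(u)$ on $\p\Om\times[0,T]$. Decomposing $\Delta u$ into the normal second derivative, the mean-curvature term $H\,u_\nu$, and the tangential Laplacian of $u$ along $\p\Om$ (which vanishes since $u\equiv 0$ there), and using that $F'(0)\le$ the relevant sign coming from $F\ge 0$, $F(0)$ minimal, one computes the inward normal derivative $\p_\nu P$ on $\p\Om\times[0,T]$ and finds $\p_\nu P\le -2H\,u_\nu^2\le 0$ because $H\ge 0$ and $u\ge 0$ forces $u_\nu\ge 0$ on $\p\Om$. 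Thus if $w$ (equivalently $P$, since the barrier terms are smooth and their normal derivative is controlled) had a positive maximum on $\p\Om\times[0,T]$, the Hopf lemma would force $\p_\nu w>0$ there, contradicting $\p_\nu P\le 0$ once $R$ is large and $M$ is chosen so the barrier's normal-derivative contribution is negligible. The condition $\|u(\cdot,0)\|_{C^{1,\alpha}(\overline\Om)}<\infty$ together with parabolic Schauder theory up to the (smooth, by \eqref{l2}) boundary guarantees $u\in H_{3+\alpha}$ up to $\p\Om\times[0,T]$, so all these boundary computations are legitimate.

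I expect the main obstacle to be making the boundary analysis rigorous near points where $Du=0$ on $\p\Om$ — there the differential inequality for $P$ from Lemma \ref{L1} degenerates, and one must argue that the maximum of $w$ cannot be attained at such a point either. The standard device is to note that at an interior-to-$\p\Om$ point where $Du=0$ we have $P=-2F(u)\le 0<K$ directly (so $w<K$ there), reducing the boundary analysis to points of $\p\Om$ where $|Du|>0$, at which the Hopf argument above applies. Combining the three cases — interior maximum excluded by the structure inequality and the barrier, bottom $t=0$ excluded by \eqref{est2}, lateral boundary excluded by the mean-curvature/Hopf computation, and the spherical part excluded by the size of $M$ — yields $w\le R^{-1/2}$ on $Q_R$, and letting $R\to\infty$ gives \eqref{est22} at the arbitrary point $(y,s)$.
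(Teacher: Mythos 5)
Your proposal is a genuinely different strategy from the paper's. The paper proves Theorem~\ref{main2} by a compactness/sliding argument in the spirit of Farina--Valdinoci: it introduces the class $\Sigma$ of translated epigraph problems with uniform Schauder bounds, defines $P_0 = \sup_{v\in\Sigma}\sup P(v,\cdot,\cdot)$, assumes $P_0>0$, extracts a limiting configuration $(u_0,\Om_0,t_0)$ with $t_0>0$, shows the supremum is attained on the lateral boundary via the degeneracy $\inf_x|Du_0(x,t)|=0$, and reaches a contradiction between the Hopf lemma and the mean-curvature computation. You instead run the CGS-style barrier argument of Theorem~\ref{main1} directly on $(\Om\cap B(0,R))\times[0,T]$. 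The trade-off is real: the barrier method avoids the extraction-of-limits machinery (the subtle step ruling out $g_k(0)\to-\infty$ via the $\Rn$-case of the estimate, the Ascoli--Arzel\`a passage, etc.), but it demands uniform pointwise control of $u$ and its derivatives of order up to three on the whole strip $\Om\times[0,T]$.

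This last point is where your argument has a genuine gap. You assert that $\|u(\cdot,0)\|_{C^{1,\alpha}(\overline\Om)}<\infty$ plus parabolic Schauder theory gives $u\in H_{3+\alpha}$ up to $\partial\Om\times[0,T]$. That is false: $C^{1,\alpha}$ initial data yields at most $H_{1+\alpha}$ regularity up to $t=0$, and pushing to $H_{2+\alpha}$ (let alone $H_{3+\alpha}$) on $\overline\Om\times[0,T]$ requires first- and higher-order compatibility conditions at the corner $\partial\Om\times\{0\}$, which are not assumed. The paper states this explicitly after \eqref{d5}: ``we cannot take $\ve=0$, since the compatibility conditions at the corner points need not hold.'' The paper's compactness argument is structured precisely to sidestep this issue, since the limit point is shown to satisfy $t_0>0$, where \eqref{d5} applies. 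Your barrier argument, as written, needs the constants $M$, $c$, $M''$ to be chosen using bounds on derivatives of $u$ up to order three uniformly on $\Om\times[0,T]$, and these bounds may degenerate as $t\to 0^+$ near $\partial\Om$. You would need a separate device to handle the corner (e.g., running the barrier only on $[\ve,T]$ and controlling the initial slice by a limiting argument), and it is not clear how to do this without essentially rebuilding the paper's compactness step.

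There is also a consistent sign error in your boundary computation. With $\nu$ the \emph{inward} unit normal, the Hopf lemma applied to the subsolution $P$ at a boundary maximum gives $\partial_\nu P<0$ (the function strictly decreases into the domain), not $\partial_\nu P>0$. And the geometric computation goes the other way from what you wrote: using $u_t=0$ on $\partial\Om$, the elliptic equation $\Delta u = F'(0)$ there, and the decomposition $\Delta u = u_{\nu\nu} - H u_\nu$ (inward normal, $H\ge 0$ the mean curvature), one finds $u_{\nu\nu}=F'(0)+Hu_\nu$ and hence
\[
\partial_\nu P = 2u_\nu\bigl(u_{\nu\nu}-F'(0)\bigr) = 2Hu_\nu^2 \ \ge\ 0,
\]
not $\partial_\nu P\le -2Hu_\nu^2$. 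The contradiction is therefore $\partial_\nu P\ge 0$ from geometry against $\partial_\nu P<0$ from Hopf, exactly as in \eqref{d10} of the paper. Your version has both signs reversed, so the ``contradiction'' you obtain is spurious as stated, even though the underlying mechanism (Hopf versus mean curvature) is the right one and would work once the signs are corrected and the regularity issue is addressed.

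Finally, a smaller point: applying Hopf to $w$ rather than $P$ requires the barrier terms' normal-derivative contribution to be dominated by the Hopf constant, and that Hopf constant depends on the ellipticity/boundedness of the operator in Lemma~\ref{L1}, whose coefficient $B=2F'(u)Du/|Du|^2$ blows up as $|Du|\to 0$. At a positive max of $w$ you do have a lower bound $|Du|\gtrsim R^{-1/4}$, but then $|B|\lesssim R^{1/4}$ grows with $R$, and you would need to track that this growth does not destroy the Hopf constant faster than the barrier term $O(M/R)$ shrinks. This is another place where your sketch would need to be made quantitative.
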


\begin{proof}
Henceforth in this paper for a given function $g :\R^{n-1}\to \R$ we denote by 
\[
\Om_{g} = \{(x',x_n)\in \Rn\mid x'\in \R^{n-1},\  x_n > g(x')\}
\]
its  epigraph. With $\alpha$ as in the  hypothesis \eqref{l2} above, we denote  
\begin{align*}
\mathcal F  = & \bigg\{g \in C^{2, \alpha}(\R^{n-1}) \mid \partial \Om_g\  \text{has nonnegative mean} 
\\
& \text{curvature and}\  ||D g||_{C^{1, \alpha}(\R^{n-1})}\leq ||Dh||_{C^{1, \alpha}(\R^{n-1})}\bigg\}.
\end{align*}

We now note that, given a bounded solution $u$ to \eqref{l3} above, then by Schauder regularity theory (see Chapters 4, 5 and 12 in \cite{Li}) one has 
\begin{equation}\label{d2}
||u||_{H_{1+\alpha}(\overline{\Om} \times [0,T])} \leq C,
\end{equation}
for some universal $C>0$ which also depends on $\Om$ and $||u(\cdot, 0)||_{C^{1, \alpha}(\overline{\Om})}$, and for every $\ve > 0$ there exists $C(\ve)>0$ such that
\begin{equation}\label{d5}
||u||_{H_{2+\alpha}(\overline{\Om} \times [\ve,T])} \leq C(\ve).
\end{equation}
Note that  in \eqref{d5}, we cannot take $\ve=0$, since the compatibility  conditions  at the corner points  need not hold. With $C$ as in \eqref{d2}, we now define
\begin{align*}
\Sigma  = & \bigg\{v \in C^{2, 1}(\overline{\Om}_{g} \times [0,T])\mid \text{there exists}\ g \in \mathcal F\ \text{for which}\ v\ \text{solves \eqref{l3} in}\  \Om_{g} \times [0,T],   
\\
& \text{with}\ 0 \leq v \leq ||u||_{L^{\infty}}, \  ||v||_{H_{1+\alpha}(\overline{\Om}_{g} \times [0, T])} \leq C,\ P(v,x,0)\leq 0 \bigg\}.
\end{align*}
Note that in the definition of $\Sigma$ we have that given any $v \in \Sigma$, there exists  a corresponding  $g^{(v)}$ in $\mathcal F$ such that the assertions in the definition of the class $\Sigma$ hold. Moreover $\Sigma$ is non-empty since $u \in \Sigma$. From now on, with slight  abuse of notation,  we  will  denote  the corresponding $\Om_{g^{(v)}}$  by $\Om_{v}$.

We now set
\[
P_{0}= \underset{v \in \Sigma, (x, t) \in \Om_{v} \times [0,T]}{\sup}\ P(v;x,t).
\]
We note that $P_0$ is finite  because  by the definition of $\Sigma$, every $v \in \Sigma$  has  $H_{1+ \alpha}$  norm bounded  from above by a constant $C$ which  is independent of $v$.  Furthermore, by  Schauder regularity theory we have that  \eqref{d5} holds uniformly for $ v \in \Sigma$ in $\overline{\Om}_{v} \times [0,T]$. Our objective  is to establish that
\begin{equation}\label{obj}
P_{0} \leq 0.
\end{equation}
Assume on the contrary that $P_0>0$. For every $k\in \mathbb N$ there exist $v_{k}\in \Sigma$ and $(x_k, t_{k})\in \Om_{v_k} \times [0,T]$ such that  
\[
P_0 - \frac 1k < P(v_k,x_k,t_k) \le P_0.
\]
By compactness, possibly passing to a subsequence, we know that there exists $t_0\in [0,T]$ such that $t_k\to t_0$. We define
\[
u_{k}(x, t)= v_{k}(x + x_k,t_k).
\]
We then have that $u_{k} \in \Sigma$ and $0 \in \Om_{u_k}$. Moreover, 
\[
P(u_k, 0, t_k) = P(v_k,x_k,t_k) \to P_0.
\]
Now, if we  denote by $g_{k}$ the function corresponding to the  graph of $\Om_{u_k}$, from the fact that $0 \in \Om_{u_k}$ we infer that
\[
g_{k}(0) \leq 0.
\]
We now claim that  $g_{k}(0)$ is bounded. If not, then  there exists a  subsequence such that 
\[
g_{k}(0) \to -\infty.
\]
Moreover since   $||Dg_{k}||_{C^{1,\alpha}}$ is bounded uniformly in $k$, we conclude that  for every $x' \in \R^{n-1}$  
\begin{equation}
g_{k}(x') \to -\infty,
\end{equation}
and the same conclusion holds locally uniformly in $x'$. Since the $u_{k}$'s are  uniformly bounded in $H_{1+\alpha}(\Om_{u_{k}} \times [0,T])$, we have that  $u_{k} \to w_0$ locally  uniformly in $\Rn \times [0,T]$. Note that  this can be justified by taking an extension of $u_k$ to $\Rn \times [0,T]$ such that \eqref{d2} hold in $\Rn \times [0,T]$, uniformly in $k$. Applying \eqref{d5} to the $u_{k}$'s we see that the limit function $w_0$ solves \eqref{l3} in $\Rn \times [0,T]$.  Since by the definition of $\Sigma$ we have $P(u_k,0,0) = P(v_k,x_k,0) \leq 0$, we have that  $t_0>0$, and therefore by \eqref{d2}
 we conclude that $P(w_0,0,t_0) = P_0 > 0$. Moreover, again by \eqref{d2}, we have $P(w_0,0,0) \leq 0$. This leads to a contradiction with the case $p=2$ of Theorem 1.3 established in \cite{BG}. Therefore, the sequence $\{ g_{k}(0)\}$ must be bounded. 

Now since $g_{k}$'s are such that $Dg_{k}$'s have  uniformly bounded $C^{1, \alpha}$ norms,  we conclude by Ascoli-Arzel\`a that there exists $g_{0} \in \mathcal F$ such  that $g_{k} \to g_{0}$ locally uniformly in $\R^{n-1}$. We denote  
\[
\Om_{0} = \{(x', x_n)\in \Rn \mid x'\in \R^{n-1},\  x_n > g_0(x')\}. 
\]
For each $k$,  by taking  an extension $\tilde u_{k}$ of $u_{k}$ to $\Rn \times [0,T]$ such that $\tilde u_{k}$  has  bounded $H_{1+\alpha}$ norm, we have that (possibly on a  subsequence) $\tilde u_k \to u_0$ locally uniformly in $\R^{n}$. Moreover, because of \eqref{d5} applied to $u_k$'s, the function $u_0$ solves the equation \eqref{e0} in $\Om_0 \times (0,T]$. We also note that  since $Dg_k$'s have uniformly bounded $C^{1,\alpha}$ norms, $\partial \Om_{0}$ has nonnegative mean curvature.  Moreover, by arguing as in (33) and (34) in \cite{FV}, we have that $u_0$ vanishes on $\partial \Om_0 \times [0,T]$. Also, it follows  that $P(u_0,x,0) \leq 0$ for $x\in \Om_0$, and therefore $u_0 \in \Sigma$. Arguing by compactness as previously in this proof, we infer that must be $t_0 > 0$, and since $u_0 \in \Sigma$, that 
\[
P_0 = P(u_0,0,t_0)=  \underset{(x,t)\in \Om_0\times [0,T]}{\sup}\ P(u_0,x,t) > 0.
\]
Since $u_0 \geq 0$  and $u_0$ vanishes on $\partial \Om_0 \times [0,T]$, indicating by $\nu$ the inward unit normal to $\partial \Om_0$ at $x$, we have for each $(x,t) \in \partial \Om_0\times [0,T]$
\begin{equation}\label{d6}
\partial_{\nu} u_0 (x, t) \geq 0.
\end{equation}
Given \eqref{d6} and from the  fact that $u_{0}$ is bounded, by arguing as in (36)-(38) in \cite{FV}, it follows that for all $t \in [0,T]$ 
\begin{equation}\label{d7}
\underset{x\in \Om_0}{\inf}\ |Du_0(x,t)|=0.
\end{equation}

Next, we  claim that if for a time level $t > 0$ we have $P(u_0, \overline y, t) =P_0$, then it must be $\overline y \in \partial \Om_0$. To see this, suppose  on the contrary that $\overline y \in \Om_0$. Since  $P_0>0$, this implies that  $|Du_0(\overline y, t)| > 0$. Consider now the set  
\[
U= \{x \in \Om_0\mid P(u_0, x,t)=P_0\}.
\]
Clearly, $U$ is closed, and since $\overline y \in U$  by assumption, we also know that $U \not=\varnothing$. We now  prove that $U$ is open. Since $|Du_0(x, t)|>0$ for every $x \in U$, by Lemma \ref{L1} and the strong maximum principle (we note that since $F \in C^{2, \beta}_{loc}$, we have that $u_0 \in H_{3+\alpha'}$ in the interior for some $\alpha'$ which also depends on $\beta$. Hence, $P(u_0,\cdot,\cdot)$ is a classical subsolution), we conclude that for every $x\in U$ there exists $\delta_x>0$ such that $P(u_0,z,t)= P_0$ for $z \in B(x,\delta_x)$. This implies that $U$ is open.

Since $\Om_0$, being an epigraph, is connected, we conclude that $U = \Om_0$. Now from \eqref{d7} we have that for every fixed $t\in [0,T]$ there exists a sequence $x_j \in \Om_0$ such that $Du_0(x_j,t) \to 0$  as $ j \to \infty$. As a consequence, $\underset{j\to \infty}{\liminf}\ P(u_0,x_j,t)\le 0$. This implies that for large enough $j$ we must have $P( u_0, x_j, t) < P_0$, which contradicts the above conclusion that $U = \Om_0$. Therefore  this establishes the claim that if $P(u_0,\overline y,t) =P_0$, then $\overline y \in \partial \Om_0$. Since $P(u_0,0, t_0)= P_0$, and $P_0$ is assumed to be positive, this implies in particular that $(0, t_0) \in \partial \Om_0 \times (0,T]$. Again, since $P_0>0$ by assumption, we  must have that in \eqref{d6} a strict inequality holds at $(0,t_0)$, i.e.
\begin{equation}\label{d8}
\partial_{\nu} u_0 (0, t_0) > 0.
\end{equation}
This is  because if  the normal derivative  is zero at  $(0, t_0)$, then it must also be $Du_0(0, t_0)=0$ (since  $u_0$ vanishes on the lateral boundary of $\Om_0\times [0,T]$), and this contradicts the fact that $P_0>0$.  

From \eqref{d8} we infer that $Du_0(0, t_0) \neq 0$, and therefore Lemma \ref{L1} implies that, near $(0,t_0)$, the function $P(u_0,\cdot,\cdot)$ is a  subsolution to a  uniformly parabolic equation.  Now,  by an application of the Hopf Lemma (see for instance Theorem 3'  in \cite{LN})  we have that
\begin{equation}\label{d10}
\partial_{\nu} P (u_0,0,t_0) < 0.
\end{equation}
Again  by noting that $u_0$ vanishes   on the lateral  boundary, we have that $\partial_t u_0=0$ at $(0,t_0)$. Therefore, at $(0,t_0)$, the function $u_0$ satisfies the elliptic equation
\begin{equation}\label{d9}
\Delta u_0= F'(u_0).
\end{equation}
At this point, by using the  fact that the mean curvature of $\partial \Om_{0}$ is nonnegative and the  equation \eqref{d9} satisfied by $u_{0}$ at $(0, t_0)$, one can argue as in (50)-(55) in \cite{FV} to reach a  contradiction with \eqref{d10} above. Such contradiction being generated from having assumed that $P_0>0$, we conclude that \eqref{obj} must hold, and this implies the sought for conclusion of the theorem.

\end{proof}

\begin{rmrk}
Note that in the hypothesis of Theorem \ref{main2} instead of $u\ge 0$ we could have  assumed that $\partial_{\nu} u \geq 0$ on $\partial \Om \times [0, T]$.
\end{rmrk}

\begin{rmrk}
We also note that the conclusion in Theorem \ref{main2} holds if  $\Om$ is of the form $\Om= \Om_{0} \times \R^{n-n_0}$  for $1 \leq n_0 \leq n$, where $\Om_{0}$ is a bounded  $C^{2, \alpha}$ domain with nonnegative mean  curvature. The corresponding  modifications in the proof would be as follows.  Let $D$ be the set of  domains which are  all translates of $\Om$. The classes $H$ and $\Sigma$  would be  defined corresponding to $D$  as in \cite{FV}. Then, by arguing as in the proof of Theorem \ref{main2},  we  can assume that  sets $\Om_{u_k} \in D$  are such that  $\Om_{u_k}= p_{k} + \Om$ where  $p_k= (p'_k, 0) \in \R^{n_0} \times  \R^{n-n_0}$. Since $0 \in \Om_{u_k}$ and $\Om_{0}$ is bounded, this implies that $p'_k$ is bounded independent of $k$. Therefore, up to a subsequence, $p_k \to  p_0$ and  $u_k \to u_0$ such that $u_0$ solves \eqref{l3} in $\Om_1= p_0 + \Om$. The rest of the proof remains the same  as that of Theorem \ref{main2}.
\end{rmrk}

\begin{rmrk}\label{i1}
It remains an interesting open  question  whether Theorem \ref{main2} holds for the  inhomogeneous variant of the  normalized $p$-Laplacian evolution studied in \cite{BG}. Note that unlike the case  of $\Rn$, the Hopf lemma  applied to $P$ is a crucial step  in the proof of Theorem \ref{main2}  for which Lemma  \ref{L1} is the key ingredient. As far as we are aware of, an appropriate analogue of Lemma \ref{L1} is not known to be valid for $p \neq 2$,  even in the case when $F=0$.  Therefore, to be able  to generalize Theorem \ref{main2} to the case of  inhomogeneous  normalized $p$-Laplacian evolution as studied in \cite{BG}, lack of  an appropriate subsolution-type  argument (i.e., Lemma \ref{L1}), and a priori $H_{1+\alpha}$ estimates  seem to be the two major obstructions at this point.
\end{rmrk}

\section{Gradient estimates for the reaction-diffusion equation \eqref{e0} in $\Rn \times (-\infty,0]$ and $\Om \times (-\infty,0]$}\label{S:moremods}

In this section we turn our attention to the settings $\Rn \times (-\infty,0]$ and  $\Om \times (-\infty,0]$, where $\Om$ is an epigraph satisfying the geometric assumptions as in the previous section. We investigate the validity of Modica  type gradient estimates in a different situation with respect to that of Section \ref{S:fv}, where such estimates were established under the crucial hypothesis that the initial datum satisfies a similar inequality. We first note that such unconstrained global estimates cannot be expected in $\Rn \times [0,T]$ without any assumption on the initial datum. This depends of the fact that, if at time $t=0$ the initial datum is such that the function defined in \eqref{h1} above satisfies $P(u, x, 0) > 0$ at  some  $x \in \Rn$, then by continuity $P(u, x, t) > 0$ for all $t \in [0,\ve]$, for some $\ve>0$. This justifies our choice of the setting in this section. We now  state our first main result.

\begin{thrm}\label{main3}
Let $u$ be a  bounded solution to \eqref{e0} in $\Rn \times (-\infty, 0]$. Then,  with $P(u,\cdot,\cdot)$ as in \eqref{h1} we have 
\begin{equation}\label{Pu}
P(u, x,t)\leq 0,\ \ \ \ \ \  \text{for all}\ (x, t)\in \Rn \times (-\infty, 0].
\end{equation}
\end{thrm}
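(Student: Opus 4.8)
The plan is to argue by contradiction, in the spirit of the rigidity arguments of \cite{CGS}: produce, by a sliding/compactness argument, an ancient solution for which $\sup P$ is attained; use Lemma \ref{L1} and the strong maximum principle to force $P$ to be constant; then contradict the boundedness of $u$ via a gradient-flow estimate. To begin, note that $P_0 := \sup_{\Rn \times (-\infty,0]} P(u,\cdot,\cdot)$ is finite: since $u$ is bounded, $F'(u)$ is bounded, and applying interior parabolic Schauder estimates on cylinders $B(x_0,1) \times (t_0-1, t_0]$ --- all of which lie in $\Rn \times (-\infty,0]$ since $t_0 \le 0$ --- gives a uniform bound $\|u\|_{H_{3+\alpha}} \le C$ on $\Rn \times (-\infty,0]$, so in particular $|Du| \le C$. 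Suppose now, for contradiction, that $P_0 > 0$, and pick $(x_k, t_k) \in \Rn \times (-\infty,0]$ with $P(u,x_k,t_k) \to P_0$. Setting $u_k(x,t) = u(x+x_k, t+t_k)$ --- a bounded solution of \eqref{e0} on $\Rn \times (-\infty, -t_k] \supseteq \Rn \times (-\infty,0]$ --- the uniform estimates and Arzel\`a--Ascoli give, along a subsequence, $u_k \to u_\infty$ in $C^{2,1}_{loc}$, where $u_\infty$ is a bounded solution of \eqref{e0} on $\Rn \times (-\infty,0]$ with $\|u_\infty\|_\infty \le \|u\|_\infty$, with $P(u_\infty, 0, 0) = P_0$, and with $P(u_\infty, x, t) = \lim_k P(u, x+x_k, t+t_k) \le P_0$ for every $(x,t) \in \Rn \times (-\infty,0]$ (because $t+t_k \le 0$ there). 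Thus $P(u_\infty,\cdot,\cdot)$ attains its maximum $P_0$ at $(0,0)$.

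Since $P_0 > 0$ and $F \ge 0$, at any point where $P(u_\infty,\cdot,\cdot) = P_0$ one automatically has $|Du_\infty|^2 = P_0 + 2F(u_\infty) \ge P_0 > 0$; hence, by Lemma \ref{L1}, $P(u_\infty,\cdot,\cdot)$ is a subsolution of a uniformly parabolic operator in a neighbourhood of every such point. As it attains its maximum at $(0,0)$ and the backward propagation set of $(0,0)$ in the cylinder $\Rn \times (-\infty,0]$ is the whole cylinder --- space is unbounded and time runs down to $-\infty$ --- a continuation argument based on the strong maximum principle (using the self-sustaining non-degeneracy just noted to keep the operator uniformly parabolic along the way) forces $P(u_\infty,\cdot,\cdot) \equiv P_0$ on all of $\Rn \times (-\infty,0]$.

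Constancy of $P(u_\infty,\cdot,\cdot)$ gives $|Du_\infty(x,t)|^2 = P_0 + 2F(u_\infty(x,t))$, so $\sqrt{P_0} \le |Du_\infty| \le C'$ everywhere, with $C' < \infty$ because $u_\infty$ is bounded and $F$ is locally bounded. Fix the slice $t = 0$ and set $h := u_\infty(\cdot,0)$; then $h$ is bounded on $\Rn$, with $\sqrt{P_0} \le |Dh| \le C'$ and (by the uniform bound on $D^2 h$) Lipschitz gradient. The vector field $x \mapsto Dh(x)/|Dh(x)|$ therefore has unit norm and is Lipschitz, so its flow line $\gamma$ with $\gamma(0)=0$ is defined for all $s \in \R$, and along it $\tfrac{d}{ds} h(\gamma(s)) = |Dh(\gamma(s))| \ge \sqrt{P_0}$. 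Hence $h(\gamma(s)) \ge h(0) + \sqrt{P_0}\,s \to +\infty$ as $s \to +\infty$, contradicting $\|h\|_\infty \le \|u\|_\infty < \infty$. Therefore $P_0 \le 0$, which is \eqref{Pu}.

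I expect the delicate point to be the propagation step in the second paragraph. The parabolic strong maximum principle spreads equality with the maximum only ``backward'' along time-non-increasing paths, so turning ``$P = P_0$ near $(0,0)$'' into ``$P \equiv P_0$ on a full spatial slice $\Rn \times \{0\}$'' (which is exactly what the gradient-flow argument needs) requires a careful connectedness argument --- one must verify that the non-degeneracy furnished by Lemma \ref{L1} persists throughout the continuation, and handle the behaviour at the final time $t = 0$. By contrast, the compactness step is routine once the uniform parabolic estimates are available, the only mild subtlety being the harmless dichotomy $t_k \to -\infty$ (in which case $u_\infty$ is in fact defined on all of $\Rn \times \R$) versus $\{t_k\}$ bounded.
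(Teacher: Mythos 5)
Your proof is correct and follows essentially the same route as the paper's: a translation--compactness argument produces a limit solution $u_\infty$ for which $P$ attains its positive supremum $P_0$ at $(0,0)$, Lemma \ref{L1} together with the strong maximum principle (and the self-sustaining non-degeneracy $|Du_\infty|^2 \ge P_0 > 0$ wherever $P = P_0$) forces $P \equiv P_0$ on the slice $t=0$, and one then contradicts the boundedness of $u_\infty$ using the uniform lower gradient bound. The one place you are more explicit than the paper is the final contradiction: the paper simply asserts that boundedness of $u_0$ implies $\inf_x |Du_0(x,0)| = 0$, whereas you spell out the underlying unit-speed gradient-flow argument; this is the same fact, just made self-contained.
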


\begin{proof}
The proof is inspired to that of Theorem 1.6 in \cite{CGS}. We  define  the   class $\Sigma$ as follows.
\begin{equation}\label{d15}
\Sigma= \{v\mid v\ \text{solves \eqref{e0} in}\ \Rn \times (-\infty, 0],\ \ ||v||_{L^{\infty}} \leq ||u||_{L^{\infty}}\}.
\end{equation}
Note that $u \in \Sigma$. Set
\begin{equation}\label{d16}
P_{0}=\text{sup}_{v \in \Sigma, (x,t) \in \Rn \times (-\infty, 0]}  P(v, x, t).
\end{equation}
Since $F \in C^{2, \beta}_{loc}(\R)$ and the $L^{\infty}$ norm of $v \in \Sigma$ is uniformly bounded by that of $u$, from the Schauder theory we infer all elements $v \in \Sigma$ have uniformly  bounded $H_{3+\alpha}$ norms in $\Rn \times (-\infty,0]$, for some $\alpha$ depending also on $\beta$. Therefore, $P_0$ is bounded.    

We claim that $P_{0} \leq 0$. Suppose, on the contrary, that $P_0>0$. Then, there exists  $v_{k} \in \Sigma$ and corresponding points $(x_k,t_k) \in \Rn \times (-\infty,0]$ such that  $P(v_k,x_k,t_k) \to P_{0}$. Define now
\begin{equation}\label{li}
u_{k}(x,t)= v_{k}(x+x_k, t+ t_k).
\end{equation}
Note that since $t_k \leq 0$, we have that  $u_k \in \Sigma$ and $P(u_k, 0, 0)  = P(v_k,x_k,t_k) \to P_0$. Moreover, since  $u_{k}$'s  have  uniformly  bounded $H_{3+\alpha}$ norms,  for a subsequence, $u_{k} \to u_0$ which belongs to $\Sigma$. Moreover, 
\[
P(u_0, 0, 0) = \underset{(x,t)\in \Rn\times (-\infty,0]}{\sup}\  P(u_0, x, t)= P_0 > 0.
\]
As  before, this implies that $Du_0(0, 0) \neq 0$. Now,  by an application of Lemma \ref{L1}, the strong maximum principle and the connectedness of $\Rn$, we have that $P(u_0,x,0)= P_0$ for all $x \in \Rn$. On the other hand, since $u_0$ is bounded, it follows that
\[
\underset{x\in\Rn}{\inf}\ |Du_0(x,0)|=0.
\]
Then, there exists $x_j \in \Rn$ such that $|Du_0(x_j,0)| \to 0$. However, we have that $P(u_0,x_j,0)=P_0>0$ by assumption which is a contradiction for large enough $j$. Therefore, $P_0 \leq 0$ and the conclusion follows.

\end{proof}

As an application of Theorem \ref{main3} one has the following result on the propagation of zeros whose proof is identical to that of Theorem 1.8 in \cite{CGS} (see also Theorem 1.6 in \cite{BG}).
\begin{cor}\label{main4}
Let $u$ be a bounded solution to \eqref{e0} in $\Rn \times (-\infty, 0]$. If $F(u(x_0,t_0))=0$ for some point $(x_0,t_0)\in \Rn \times (-\infty,0]$, then $u(x,t_0)= u(x_0,t_0)$ for all $x \in \Rn$.
\end{cor}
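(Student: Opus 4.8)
The plan is to extract all the needed information from the global estimate of Theorem \ref{main3} and from the regularity of $F$, using no further properties of the equation \eqref{e0}; this is exactly why the argument will be literally the same as for the elliptic propagation-of-zeros statement, Theorem 1.8 in \cite{CGS}. First I would record that, by Theorem \ref{main3}, $|Du(x,t)|^2\le 2F(u(x,t))$ on $\Rn\times(-\infty,0]$. Write $c=u(x_0,t_0)$. Since $F\ge 0$ and $F(c)=0$, the number $c$ is a global minimum of $F\in C^{2,\beta}_{loc}(\R)$, hence $F'(c)=0$, and Taylor's formula gives $\eta>0$ and $C_0>0$ with
\[
F(\tau)\le C_0(\tau-c)^2\qquad\text{whenever }|\tau-c|\le\eta .
\]

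Next I would reduce the claim to a one-dimensional ODE comparison on the time slice $t=t_0$. Fix a unit vector $e\in\Rn$ and set $\varphi(s)=u(x_0+se,t_0)$, which is $C^1$ in $s\in\R$ with $\varphi(0)=c$. Restricting the gradient estimate to the line $s\mapsto x_0+se$ at time $t_0$ yields
\[
|\varphi'(s)|=|\langle Du(x_0+se,t_0),e\rangle|\le |Du(x_0+se,t_0)|\le\sqrt{2F(\varphi(s))},
\]
and the goal becomes to show $\varphi\equiv c$. I would do this by a continuation (Gr\"onwall) argument: let $s^{\ast}=\sup\{s\ge 0:\varphi\equiv c\text{ on }[0,s]\}$, which is the symmetric statement for $s\le 0$. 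If $s^{\ast}<\infty$, then $\varphi(s^{\ast})=c$ by continuity, so $|\varphi(s)-c|\le\eta$ on some $[s^{\ast},s^{\ast}+\delta]$, where therefore $\sqrt{2F(\varphi(s))}\le\sqrt{2C_0}\,|\varphi(s)-c|$; consequently, with $y(s)=|\varphi(s)-c|^2$,
\[
y'(s)=2(\varphi(s)-c)\varphi'(s)\le 2\sqrt{2C_0}\,y(s),\qquad y(s^{\ast})=0,
\]
and Gr\"onwall forces $y\equiv 0$ on $[s^{\ast},s^{\ast}+\delta]$, contradicting maximality of $s^{\ast}$. Hence $\varphi\equiv c$ on all of $\R$. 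Since $e$ was arbitrary and every point of $\Rn$ lies on a line through $x_0$, we conclude $u(x,t_0)=c=u(x_0,t_0)$ for all $x\in\Rn$.

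The step I expect to be the delicate one is not an obstruction so much as a point requiring care: evaluating $P(u,\cdot,\cdot)\le 0$ at $(x_0,t_0)$ forces $Du(x_0,t_0)=0$, so the strong maximum principle for $P$ via Lemma \ref{L1} is unavailable at the relevant point, and the degenerate inequality $|\varphi'|\le\sqrt{2F(\varphi)}$ alone does not give uniqueness because $\sqrt{F}$ is only H\"older near $c$. The quadratic bound $F(\tau)\le C_0(\tau-c)^2$, which is precisely what $F\in C^2$ together with $F'(c)=0$ provides, is what upgrades this to a Lipschitz differential inequality amenable to Gr\"onwall; and one must keep $\varphi$ inside $[c-\eta,c+\eta]$ for that bound to apply, which is exactly what the open--closed continuation argument ensures.
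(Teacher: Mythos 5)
Your argument is correct and is precisely the one the paper is invoking when it says the proof is ``identical to that of Theorem 1.8 in \cite{CGS}'': the only inputs are the a priori bound $|Du|^2\le 2F(u)$ from Theorem~\ref{main3}, the observation that $F\ge 0$, $F\in C^{2,\beta}_{loc}$ and $F(c)=0$ force $F'(c)=0$ and hence the quadratic bound $F(\tau)\le C_0(\tau-c)^2$ near $c$, and a Gr\"onwall/continuation argument along lines through $x_0$ on the fixed slice $t=t_0$. No gaps; this is the intended argument.
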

We also have the following counterpart of Theorem \ref{main3} in an infinite cylinder of the type $\Om \times (-\infty,0]$ where $\Om$ satisfies the hypothesis in Theorem \ref{main2}.

\begin{thrm}\label{main5}
Let $\Om\subset \Rn$  be as in \eqref{l1} above, with $h$ satisfying \eqref{l2}. Furthermore, assume that the mean curvature of $\partial \Om$ be nonnegative. Let $u$ be a nonnegative  bounded  solution to the following problem
\begin{equation}\label{l34}
\begin{cases}
\Delta u = u_t + F'(u)
\\
u=0\ \text{on}\ \partial \Om \times (-\infty,0].
\end{cases}
\end{equation}
Then, we  have that $P(u, x,t)\leq 0$ for all $(x, t)\in \Om \times (-\infty,0]$.
\end{thrm}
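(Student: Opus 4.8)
The plan is to mimic the proof of Theorem \ref{main3}, combining the compactness/contradiction scheme with the boundary analysis already carried out in the proof of Theorem \ref{main2}. First I would introduce the class $\Sigma$ of bounded solutions to \eqref{l34} on epigraphs $\Om_g$ with $g$ ranging over the class $\mathcal F$ (nonnegative mean curvature, uniform $C^{1,\alpha}$ bounds on $Dg$), normalized so that $0\le v\le \|u\|_{L^\infty}$; by Schauder theory every such $v$ has uniformly bounded $H_{3+\alpha}$ norm in the interior, so that $P_0 := \sup_{v\in\Sigma,\ (x,t)\in\Om_v\times(-\infty,0]} P(v,x,t)$ is finite. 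Since $u\in\Sigma$, the goal is to show $P_0\le 0$.

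Assuming $P_0>0$, I would pick $v_k\in\Sigma$ and points $(x_k,t_k)$ with $P(v_k,x_k,t_k)\to P_0$, translate by setting $u_k(x,t)=v_k(x+x_k,t+t_k)$ (legitimate since $t_k\le 0$, so we stay in $(-\infty,0]$), so that $0\in\Om_{u_k}$, $g_k(0)\le 0$, and $P(u_k,0,0)\to P_0$. The first substantive step is to show $\{g_k(0)\}$ is bounded: if $g_k(0)\to-\infty$, then by the uniform $C^{1,\alpha}$ bound on $Dg_k$ the domains exhaust $\Rn$ locally uniformly, the $u_k$ converge locally uniformly to a bounded solution $w_0$ of \eqref{e0} on all of $\Rn\times(-\infty,0]$ with $P(w_0,0,0)=P_0>0$, contradicting Theorem \ref{main3}. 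Hence $g_k(0)$ is bounded, and by Ascoli--Arzel\`a $g_k\to g_0\in\mathcal F$ and $u_k\to u_0$ locally uniformly, where $u_0$ solves \eqref{l34} on $\Om_0:=\Om_{g_0}$, $\partial\Om_0$ has nonnegative mean curvature, $u_0$ vanishes on $\partial\Om_0\times(-\infty,0]$ (as in (33)--(34) of \cite{FV}), $0\le u_0$, $u_0\in\Sigma$, and $P(u_0,0,0)=P_0>0$.

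From here the argument is exactly the boundary analysis in the proof of Theorem \ref{main2}, now at the time level $t=0$ (the translated problem puts the relevant point at $(0,0)$, which plays the role of $(0,t_0)$ there). Since $u_0\ge 0$ vanishes on $\partial\Om_0$, the inward normal derivative $\partial_\nu u_0\ge 0$ on $\partial\Om_0\times(-\infty,0]$, and the argument of (36)--(38) in \cite{FV} gives $\inf_{x\in\Om_0}|Du_0(x,0)|=0$. Next, using Lemma \ref{L1}, the strong maximum principle, and connectedness of the epigraph $\Om_0$, one shows that if $P(u_0,\overline y,0)=P_0$ with $\overline y\in\Om_0$ then $P(u_0,\cdot,0)\equiv P_0$ on $\Om_0$, which together with $\inf|Du_0(\cdot,0)|=0$ forces $P_0\le 0$, a contradiction; hence the maximum is attained only on $\partial\Om_0$, so $(0,0)\in\partial\Om_0\times(-\infty,0]$ and in fact $\partial_\nu u_0(0,0)>0$ (else $Du_0(0,0)=0$, contradicting $P_0>0$). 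Then $Du_0(0,0)\neq 0$, so near $(0,0)$ the function $P(u_0,\cdot,\cdot)$ is a subsolution of a uniformly parabolic equation by Lemma \ref{L1}, and Hopf's lemma (Theorem 3$'$ in \cite{LN}) gives $\partial_\nu P(u_0,0,0)<0$; finally, since $u_0=0$ on the lateral boundary we have $\partial_t u_0(0,0)=0$, so $u_0$ satisfies the elliptic equation $\Delta u_0=F'(u_0)$ at $(0,0)$, and the mean-curvature computation of (50)--(55) in \cite{FV} contradicts $\partial_\nu P(u_0,0,0)<0$. This rules out $P_0>0$, proving $P(u,x,t)\le 0$ on $\Om\times(-\infty,0]$.

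The main obstacle is the same as in Theorem \ref{main2}: one must be sure the limiting domain $\Om_0$ is genuinely an epigraph (so connected) with nonnegative mean curvature, which requires the uniform $C^{1,\alpha}$ control on $Dg_k$ to pass to the limit, and one must have uniform interior $H_{3+\alpha}$ bounds on the $u_k$ so that $P(u_k,\cdot,\cdot)$ converge in $C^2$ and the subsolution property and Hopf lemma survive the limit. I expect the boundedness of $g_k(0)$ (via the reduction to Theorem \ref{main3}) together with the verification that $u_0$ still vanishes on $\partial\Om_0$ to be where the real care is needed; everything downstream of that is a verbatim repetition of the end of the proof of Theorem \ref{main2}. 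There is also a small point, as in Theorem \ref{main3}, that the time variable is unbounded below, but since $t_k\le 0$ the translations keep us inside $(-\infty,0]$ and compactness in $t$ is not needed — the supremum is simply transported to $t=0$.
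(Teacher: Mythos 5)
Your proposal is correct and follows essentially the same approach as the paper's proof: translation along $(x_k,t_k)$ with $t_k\le 0$ to bring the near-supremum to $(0,0)$, boundedness of $g_k(0)$ obtained by reducing the degenerate case to a contradiction with Theorem \ref{main3}, and then the boundary analysis (inner normal derivative, Lemma \ref{L1}, strong maximum principle, Hopf lemma, and the mean-curvature computation of \cite{FV}) lifted verbatim from the end of the proof of Theorem \ref{main2} with $(0,0)$ playing the role of $(0,t_0)$. The paper is more terse, simply stating that the rest of the proof follows Theorem \ref{main2}, but the steps you spell out are exactly the ones intended.
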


\begin{proof}
By Schauder theory we have that
\begin{equation}\label{h13}
||u||_{H_{3+\alpha}(\Om \times (-\infty,0])} \leq C,
\end{equation}
for some $C$ which also depends on $\beta$. We let $\mathcal F$ be  as in the proof of Theorem \ref{main2} and define 
\begin{align*}
\Sigma  = & \big\{v \in C^{2, 1}(\overline{\Om}_{g} \times [0,T])\mid \text{there exists}\ g \in \mathcal F\ \text{for which}\ v\ \text{solves \eqref{e0} in}\  \Om_{g} \times [0,T],   
\\
& \text{with}\ 0 \leq v \leq ||u||_{L^{\infty}}, \  v=0\ \text{on}\ \partial \Om_{g} \times (-\infty,0]\big\}.
\end{align*}
As before, note that in the definition of $\Sigma$ we have that, given any $v \in \Sigma$, there exists  a corresponding  $g^{(v)}$ in $\mathcal F$ such that the assertions in the definition of the class $\Sigma$ hold. With slight  abuse of notation,  we will denote  the corresponding $\Om_{g^{(v)}}$  by $\Om_{v}$. Again by Schauder theory, we  have that any $v \in \Sigma$ satisfies \eqref{h13} in $\Om_{v} \times (-\infty, 0]$,  where the constant $C$ is independent of $v$. We now set,
\[
P_0= \underset{v \in \Sigma, (x,t)\in \Om_v \times (-\infty, 0]}{\sup}\ P(v,x,t).
\]
As before, we claim that  $P_0 \leq 0$. This claim would of course imply the sought for conclusion. From the definition of $\mathcal H$, we note that $P_0$ is bounded.  Suppose, on the contrary, that $P_0 >0$. Then, there exists $v_k$'s and corresponding points $(x_k,t_k)$  such that  $P(v_k,x_k,t_k) \to P_0$. We define,
\begin{equation}\label{d20}
u_k(x, t)= v_k (x+ x_k,t+ t_k).
\end{equation}
Since $t_k \leq 0$, we note that $u_k \in \Sigma$. Now,  by an application of Theorem \ref{main3} and a compactness type argument  as in the proof of  Theorem \ref{main2}, we conclude that if $g_{k}$ is  function corresponding to $\Om_{g_k}= \Om_{u_k}$, then $g_{k}(0)$'s are bounded and since $Dg_{k}$'s have uniformly  bounded  $C^{1, \alpha}$ norms, then $g_{k}$'s are bounded locally uniformly in $\R^{n-1}$. From this point on the proof follows step by step the lines of that of Theorem \ref{main2} and we thus skip pointless repetitions. There exists a $g_0\in \mathcal F$ for which $g_k\to g_0$ locally uniformly in $\R^{n-1}$ as in that proof and we call $\Om_0$ the epigraph of $g_0$. From the uniform Schauder  type  estimates,  possibly passing to a  subsequence, we conclude the existence of a solution $u_0\ge 0$ of   \eqref{e0} in $\Om_{0} \times (-\infty,0]$ such that $\Om_{u_k} \to \Om_{0}$, and $u_k \to u_0$ which solves such that $\partial \Om_{0}$ has nonnegative mean curvature. Moreover, $u_0$  vanishes on the lateral boundary and $P(u_0,0,0)= \sup\ P(u_0,0,0) = P_0$.  The rest of the proof  remains  the same as  that of Theorem \ref{main2}, but with $(0,0)$ in place  of $(0,t_0)$.

\end{proof}
\begin{rmrk}
As indicated in Remark  \ref{i1}, the conclusion of Theorem \ref{main5} remains valid with minor  modifications in the proof when $\Om= \Om_0 \times \R^{n-n_0}$, $1\leq n_0 \leq n,$ where $\Om_0$ is a bounded smooth domain with boundary having nonnegative mean curvature.
\end{rmrk}


\section{Modica  type estimates for reaction-diffusion equations on compact manifolds with nonnegative Ricci tensor}\label{S:ricci}

Let $(M, g)$  be a connected, compact Riemannian  manifold  with Laplace-Beltrami $\Delta_g$, and suppose that the Ricci tensor be nonnegative. In the paper \cite{FV4} the authors established a Modica type estimate for bounded solutions in $M$ of the semilinear Poisson equation 
\begin{equation}\label{ric}
\Delta_{g} u = F'(u),
\end{equation}
under the assumption that $F \in C^{2}(\R)$, and $F\ge 0$. Precisely, they proved that  following inequality holds 
\begin{equation}\label{est130}
|\nabla_{g} u(x)|^2 \leq 2 F(u),
\end{equation}
where $\nabla_{g}$ is the Riemannian  gradient on $M$.  

In this  section, we  prove a  parabolic  analogue of  \eqref{est130}. Our main result  can be  stated  as follows.
\begin{thrm}\label{T:ricci}
Let $M$ be a  connected compact  Riemannian manifold  with $\operatorname{Ric} \ge 0$, and let $u$ be a  bounded  solution to 
\begin{equation}\label{e131}
\Delta_{g} u = u_t +  F'(u)
\end{equation}
on $M \times (-\infty, 0]$ where $F \in C^{2, \beta}(\R)$ and $F\ge 0$. Then,  the following estimate  holds in $M \times (- \infty, 0]$
\begin{equation}\label{est131}
|\nabla_{g} u(x,t)|^2 \leq 2 F(u(x, t)).
\end{equation}
\end{thrm}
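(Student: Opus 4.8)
The plan is to mimic the strategy of Theorem \ref{main3}, replacing the translation-invariance of $\Rn$ with the compactness and homogeneity provided by the manifold, and to use the Bochner-type identity on $M$ in place of Lemma \ref{L1}. First I would set, for a bounded solution $u$ of \eqref{e131}, the function $P(u,x,t) = |\nabla_g u(x,t)|^2 - 2F(u(x,t))$, and introduce the class $\Sigma$ of all bounded solutions $v$ of \eqref{e131} on $M\times(-\infty,0]$ with $\|v\|_{L^\infty}\le \|u\|_{L^\infty}$. Since $F\in C^{2,\beta}$, parabolic Schauder theory gives a uniform $H_{3+\alpha}$ bound on all $v\in\Sigma$ (the manifold being compact, this is a genuinely global bound), so that $P_0 := \sup_{v\in\Sigma,\,(x,t)\in M\times(-\infty,0]} P(v,x,t)$ is finite; the goal is $P_0\le 0$.

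Next I would argue by contradiction, assuming $P_0>0$. Choose $v_k\in\Sigma$ and $(x_k,t_k)\in M\times(-\infty,0]$ with $P(v_k,x_k,t_k)\to P_0$. Here the role of spatial translation is played by translation in time only: set $u_k(x,t)=v_k(x,t+t_k)$, so $u_k\in\Sigma$, $t_k\le 0$, and $P(u_k,x_k,0)\to P_0$. By compactness of $M$ we may assume $x_k\to x_\infty\in M$, and by the uniform $H_{3+\alpha}$ bound a subsequence of $u_k$ converges in $C^{2,1}_{loc}$ to some $u_0\in\Sigma$ with
\begin{equation*}
P(u_0,x_\infty,0) = \sup_{(x,t)\in M\times(-\infty,0]} P(u_0,x,t) = P_0 > 0.
\end{equation*}
In particular $\nabla_g u_0(x_\infty,0)\neq 0$, so $|\nabla_g u_0|>0$ on a neighborhood. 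The essential analytic input is the manifold analogue of Lemma \ref{L1}: using $\operatorname{Ric}\ge 0$ together with the Bochner formula $\tfrac12\Delta_g|\nabla_g u|^2 = |\nabla_g^2 u|^2 + \langle \nabla_g u,\nabla_g\Delta_g u\rangle + \operatorname{Ric}(\nabla_g u,\nabla_g u)$, and the refined Kato inequality $|\nabla_g^2 u|^2\,|\nabla_g u|^2 \ge |\nabla_g|\nabla_g u|^2|^2/4$ exactly as in Lemma \ref{L1}, one obtains on $\{|\nabla_g u_0|>0\}$ a differential inequality $(\Delta_g-\partial_t)P + \langle B,\nabla_g P\rangle \ge |\nabla_g P|^2/(2|\nabla_g u_0|^2)\ge 0$ with $B = 2F'(u_0)\nabla_g u_0/|\nabla_g u_0|^2$; the only new term, $2\operatorname{Ric}(\nabla_g u_0,\nabla_g u_0)$, has the favorable sign and is simply discarded.

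Then $P(u_0,\cdot,\cdot)$ is a subsolution of a uniformly parabolic equation near $(x_\infty,0)$, and since it attains its maximum $P_0$ there, the strong maximum principle forces $P(u_0,x,0)=P_0$ for all $x$ in the connected component of the (relatively open, by the propagation argument, and closed) level set $\{P(u_0,\cdot,0)=P_0\}$ meeting $x_\infty$; as long as $|\nabla_g u_0|$ stays positive the set is open, and by connectedness of $M$ it is all of $M$ — provided we can rule out $\nabla_g u_0$ vanishing somewhere on $M\times\{0\}$. But on the compact manifold $M$ the function $x\mapsto u_0(x,0)$ must have a critical point, so $\nabla_g u_0(x_*,0)=0$ for some $x_*$, whence $P(u_0,x_*,0) = -2F(u_0(x_*,0))\le 0 < P_0$, contradicting $P(u_0,\cdot,0)\equiv P_0$. (If one worries about the level set argument passing through zeros of the gradient, the cleaner route is: let $U=\{x\in M: P(u_0,x,0)=P_0\}$; $U$ is nonempty and closed; for $x\in U$ we have $P_0>0$ so $\nabla_g u_0(x,0)\neq 0$, and the strong maximum principle applied to the uniformly parabolic operator near such $x$ gives a neighborhood in $U$; hence $U$ is open, so $U=M$, contradicting the existence of a critical point of $u_0(\cdot,0)$.) Thus $P_0\le 0$, which is \eqref{est131}.

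\textbf{Main obstacle.} The step I expect to require the most care is establishing the uniform global $H_{3+\alpha}$ (or at least $C^{2,1}$) a priori estimate for all $v\in\Sigma$ on the noncompact-in-time domain $M\times(-\infty,0]$ — one needs interior parabolic Schauder estimates applied on unit time-cylinders, using only the $L^\infty$ bound and $F\in C^{2,\beta}$, with constants independent of the base time, so that the limit $u_0$ is again a bona fide entire solution in $\Sigma$ and the translation-compactness argument closes. The Bochner computation itself is routine once $\operatorname{Ric}\ge 0$ is invoked, and the maximum-principle/connectedness endgame is essentially identical to that of Theorem \ref{main3}, the only genuinely new (and in fact simplifying) point being that compactness of $M$ supplies a critical point of $u_0(\cdot,0)$ for free, in place of the $\liminf |\nabla u_0|=0$ argument used in $\Rn$.
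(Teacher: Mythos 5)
Your proposal is correct and follows essentially the same route as the paper: same class $\Sigma$ and supremum $P_0$, same time-translation compactness argument (with compactness of $M$ replacing spatial translation), same Bochner--Weitzenb\"ock plus Kato computation discarding the nonnegative Ricci term to make $P$ a subsolution, the same strong-maximum-principle/connectedness step showing $P(u_0,\cdot,0)\equiv P_0$, and the same final contradiction from the existence of a critical point of $u_0(\cdot,0)$ on the compact manifold. The only cosmetic difference is that you phrase the Hessian lower bound as a ``refined Kato inequality,'' whereas the paper invokes the standard Kato inequality $|H_\phi|^2\ge|\nabla_g|\nabla_g\phi||^2$; these are the same estimate.
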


\begin{proof}
By  Schauder theory, we have that $u \in H_{3+ \alpha}(M \times (-\infty, 0])$ for some $\alpha$ which additionally depends on  $\beta$. This follows from writing the equation in local coordinates and  by using the compactness of $M$. We next recall the Bochner-Weitzenbock formula, which holds for any $\phi \in C^{3}(M)$
\begin{equation}\label{e132}
\frac{1}{2} \Delta_{g} |\nabla_{g} \phi|^2= |H_{\phi}|^2 + <\nabla_{g} \phi, \nabla_{g} \Delta_{g}\phi> +\operatorname{Ric}_{g}<\nabla_{g} \phi, \nabla_{g} \phi>.
\end{equation}
Here, $H_{\phi}$ is the Hessian of $\phi$ and the square of the Hilbert-Schmidt norm of $H_{\phi}$  is given by
\[
|H_{\phi}|^2= \Sigma_{i} <\bigtriangledown_{X_i} \nabla_{g},  \bigtriangledown_{X_i}, \nabla_{g}>,
\]
where $\{ X_{i} \}$ is a local orthonormal frame.
Moreover, Cauchy-Schwarz inequality gives 
\begin{equation}\label{e133}
|H_{\phi}|^2 \geq |\nabla_{g} |\nabla_{g} \phi||^2.
\end{equation}
See for instance \cite{FSV} for a proof of this fact. Now we define the class 
\[
\mathcal F= \big\{v \mid v\ \text{is a classical solution to}\  \eqref{e131}\ \text{in}\ M \times (-\infty, 0],\ ||v||_{L^{\infty}(M)} \leq ||u||_{L^{\infty}(M)}\big\}.
\]
By the Schauder theory we see as before that for every $v \in \mathcal F$ the norm of $v$ in $H_{3+\alpha}(M \times (-\infty, 0])$ is bounded independent of $v$ for some $\alpha$ which additionally depends on $\beta$. In particular, without  loss of generality, one may assume that the choice  of the exponent $\alpha$ is the same as for $u$. Now, given any $v \in \mathcal F$, we let
\begin{equation}\label{e134}
P(v,x,t)= |\nabla_{g} v(x,t)|^2 - 2 F(v(x,t)).
\end{equation}  
Applying \eqref{e132} we find
\begin{align*}
(\Delta_{g} - \partial_t ) P(v, x,t) & =  2 |H_{v}|^2 + 2 ( <\nabla_{g} v, \nabla_{g} \Delta_{g} v> + \operatorname{Ric}_{g}<\nabla_{g} v, \nabla_{g} v>)   - 
\\
& 2 < \nabla_{g} v, \nabla_{g}v_t>-   2 F'(v)( \Delta_{g} v - v_t)  - 2 <\nabla_{g} v,  \nabla_{g} F'(v)>.
\end{align*}
Using the fact that $v$ solves \eqref{e131}, we obtain
\begin{align*}
(\Delta_{g} - \partial_t ) P(v, x,t) = & 2 |H_{v}|^2 + 2< \nabla_{g} v, \nabla_{g} F'(v)> 
\\
& + 2 \operatorname{Ric}_{g}<\nabla_{g} v, \nabla_{g} v> -  2 F'(v)^2 - 2 <\nabla_{g} v,  \nabla_{g} F'(v)>.
\end{align*}
After cancelling off the term $2< \nabla_{g} v \nabla_{g} F'(v)>$, and by using \eqref{e133} and the fact that the Ricci tensor is nonnegative, we find
\begin{equation}\label{l1bis}
(\Delta_{g} - \partial_t ) P(v, x,t)= 2 |H_{v}|^2 + 2 \operatorname{Ric}_{g}<\nabla_{g} v, \nabla_{g} v> -  2 F'(v)^2 \geq  2  |\nabla_{g} (|\nabla_{g}v|)|^2  - 2 F'(v)^2
\end{equation}
Now from the definition of $P$,
\[
\nabla_{g}P- 2 F'(v) \nabla_{g} v = \nabla_{g}( |\nabla_{g} v|^2 ). 
\]
Therefore,
\[
|\nabla_{g} P|^2 + 4 F'(v)^2 |\nabla_{g} v|^2 - 4 F'(v) < \nabla_{g}v, \nabla_{g}P> = |\nabla_{g} (|\nabla_{g} v|^2)|^2= 4|\nabla_{g} v|^2 |\nabla_{g} (|\nabla_{g}v|)|^2.
\]
By dividing by $2  |\nabla_{g} v|^2$ in the latter equation, we find 
\begin{equation}\label{e136}
\frac{|\nabla_{g} P|^2} {2 |\nabla_{g} v|^2}=  2   |\nabla_{g} (|\nabla_{g}v|)|^2 - 2 F'(v)^2 + 2 \frac{F'(v)}{|\nabla_{g} v|^2 } < \nabla_{g}v, \nabla_{g}P>.
\end{equation}
Combining \eqref{l1bis} and \eqref{e136}, we finally obtain
\begin{equation}\label{l2bis}
(\Delta_{g} - \partial_t ) P  +  2 \frac{F'(v)}{|\nabla_{g} v|^2 } < \nabla_{g}v, \nabla_{g}P> \geq \frac{|\nabla_{g} P|^2} {2 |\nabla_{g} v|^2}.
\end{equation}
The inequality \eqref{l2bis} shows that $P(v, x, t)$ is a  subsolution to a  uniformly  parabolic  equation in any open set where $|\nabla_{g} v| > 0$.
Now  we define
\[
P_0= \underset{v \in \mathcal F, (x, t) \in M \times (-\infty,0])}{\sup}\ P(v, x, t).
\]
Our goal as before is to show that $P_0\leq 0$, from the which  the conclusion of the theorem would follow. Suppose on the contrary that $P_{0} > 0$. Then, there exists $v_{k} \in \mathcal F$ and $(x_k,t_k) \in M \times (-\infty,0]$ such that 
$P(v_k, x_k, t_k) \to P_0$. We define
\[
u_{k}(x, t)= v_{k}(x_k,t+ t_k).
\]
Since $t_k \leq 0$ we have that $u_{k} \in \mathcal F$, and since $M$ is compact, $x_{k} \to x_0$  after possibly passing to a subsequence. Moreover,  $P(u_k,x_0,0) \to P_{0}$. By compactness, we  have that  $u_{k} \to u_0$ in $H_{3+\alpha}$, where $u_0$ is a solution to \eqref{e131}, and $P(u_0, x_0, 0)=P_0 > 0$. Since since $F\ge 0$ this implies that $\nabla_{g} u_0(x_0,0) \neq 0$. By continuity, we see that $\nabla_{g} u_0\not= 0$ in a parabolic  neighborhood of $(x_0,0)$. By \eqref{l2bis} and by the strong maximum principle we infer that $P(u_0, x, 0)=P_0$ in a neighborhood of $x_0$, and since $M$ is connected, we  conclude that for all $x \in M$
\begin{equation}\label{l10}
P(u_0, x, 0)=P_0 > 0.
\end{equation}
Since $u_{0}(\cdot,0)\in C^{1}(M)$ and $M$ is compact, there exists $y_0 \in M$ at which $u_{0}(\cdot,0)$ attains its absolute minimum. At such point one has
\[
\nabla_{g} u_{0}(y_0,0)=0.
\]
Since $F\ge 0$, this implies that
\begin{equation}\label{reference}
P(u_0,y_0,0) \leq 0,
\end{equation}
which is a  contradiction to \eqref{l10}. Therefore $P_0 \leq 0$ and the theorem is proved.
 
\end{proof}

\begin{rmrk}
It remains an interesting question  whether the conclusion of Theorem \ref{T:ricci} ( and for that matter even the corresponding elliptic result in \cite{FV4}) continue  to  hold when $M$ is only  assumed to be complete and not compact. In such a  case, one would need to  bypass the compactness  argument  which  uses  translation  in a  crucial  way ( see for instance \eqref{li}as in the proof of Theorem \ref{main3}). We  intend to come back to this  question in a  future  study.
\end{rmrk}


\section{On a conjecture of De Giorgi and level sets of solutions to \eqref{e0}}\label{S:dg}

In 1978 Ennio De Giorgi formulated the following conjecture, also known as $\ve$-version of the Bernstein theorem: \emph{let $u$ be an entire solution to
\begin{equation}
\Delta u = u^3 - u,
\end{equation}
such that $|u|\le 1$ and $\frac{\p u}{\p x_n} >0$. Then, $u$ must be one-dimensional, i.e., must 
have level sets which are hyperplanes, at least in dimension $n\le 8$}. 

As mentioned in the introduction, the conjecture of De Giorgi has been fully solved for $n=2$ in \cite{GG} and $n=3$ in \cite{AC}, and it is known to fail for $n\ge 9$, see \cite{dPKW}. For $4 \le n \le 8$ it is still an open question. Additional fundamental progress on De Giorgi's conjecture is contained in the papers \cite{GG2}, \cite{Sa}. Besides these  developments, in \cite{CGS} it was established that for entire bounded solutions to 
\begin{equation}\label{plap}
\operatorname{div}(|Du|^{p-2}Du) = F'(u),
\end{equation} 
if the  equality  holds at some point $x_0 \in \Rn$ for the corresponding gradient estimate
\begin{equation}\label{i2}
|Du|^{p}\leq \frac{p}{p-1} F(u),
\end{equation}
then $u$ must be  one dimensional. The result in \cite{CGS} actually regarded a more general class of equations than \eqref{plap}, and in \cite{DG} some further generalizations were presented. We now  establish a  parabolic  analogue of that result in the case $p=2$.

\begin{thrm}\label{main6}
Let $u$ be a bounded solution to \eqref{e0} in $\Rn \times (-\infty,0]$. Furthermore, assume that the zero set of $F$ is discrete. With $P$ as in \eqref{h1} above, if $P(u,x_0,t_0)=0$ for some point $(x_0, t_0) \in \Rn \times (-\infty,0]$, then there exists  $g \in C^{2}(\R)$ such that $u(x,t)= g(<a,x> + \alpha)$ for some $a \in \Rn$ and $\alpha \in \R$. In particular, $u$ is independent of time, and the level sets of $u$ are vertical hyperplanes in $\Rn \times (-\infty,0]$.
\end{thrm}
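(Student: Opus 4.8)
The plan is to combine the global estimate $P(u,\cdot,\cdot)\le 0$ from Theorem \ref{main3} with the strong maximum principle provided by Lemma \ref{L1}, in the spirit of the proof of Theorem 5.1 in \cite{CGS}. First I would dispose of the degenerate alternative: if $Du(x_0,t_0)=0$, then from $P(u,x_0,t_0)=0$ we get $F(u(x_0,t_0))=0$, so by Corollary \ref{main4} the function $u(\cdot,t_0)$ is constant equal to $u(x_0,t_0)$ on all of $\Rn$; one then checks, using that the zero set of $F$ is discrete and that $t\mapsto u(x_0,t)$ is continuous with $u(x_0,t)$ a critical value of $F$ at every time where $Du$ vanishes (together with the equation \eqref{e0}, which forces $u_t=-F'(u)$ at such a point, hence $u_t=0$), that $u$ is globally constant, which is a (trivial) one-dimensional solution. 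So from now on assume $Du(x_0,t_0)\neq 0$.

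Next, in a parabolic neighborhood of $(x_0,t_0)$ we have $|Du|>0$, so Lemma \ref{L1} applies and $P(u,\cdot,\cdot)$ is a subsolution of a uniformly parabolic equation there. Since $P\le 0$ everywhere by Theorem \ref{main3} and $P(u,x_0,t_0)=0$, the function $P$ attains an interior maximum; the strong maximum principle forces $P\equiv 0$ on the connected component of $\{|Du|>0\}$ containing $(x_0,t_0)$, and in fact a continuation argument (exactly as in the proof of Theorem \ref{main2}, using connectedness of $\Rn\times(-\infty,0]$ and the fact that $P\equiv 0$ propagates up to the boundary of $\{|Du|>0\}$, where either $|Du|=0$ — impossible since then $F(u)=0$ and Corollary \ref{main4} would trivialize things — or one re-enters the open set) gives $P(u,x,t)=0$ for all $(x,t)$, and moreover $Du$ never vanishes. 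Then every inequality used in the derivation of Lemma \ref{L1} is an equality: in particular the Cauchy–Schwarz step $4\|D^2u\|^2|Du|^2\ge |D^2u\,Du|^2\cdot 4$ is saturated, which forces $D^2u$ to have rank one with range spanned by $Du$, i.e. $u_{ij}=\lambda\,\frac{u_i u_j}{|Du|^2}$ for a scalar function $\lambda$; and the Bochner-type identity $(\Delta-\partial_t)P=2\|D^2u\|^2-2F'(u)^2=0$ combined with $\|D^2u\|^2=\lambda^2$ gives $\lambda^2=F'(u)^2$.

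From $D^2u=\lambda\,\frac{Du\otimes Du}{|Du|^2}$ one reads off that $Du/|Du|$ is a parallel vector field: differentiating $\frac{u_i}{|Du|}$ and using the rank-one form of $D^2u$ shows $D(Du/|Du|)=0$, hence $Du(x,t)=|Du(x,t)|\,a$ for a fixed unit vector $a\in\Rn$ (a priori $a$ could depend on $t$, but $\partial_t(Du/|Du|)=0$ follows the same way from $\partial_i u_t = \partial_i(\Delta u - F'(u))$ expressed through the rank-one Hessian, so $a$ is genuinely constant). This says $u$ depends on $x$ only through $s:=\langle a,x\rangle$, say $u(x,t)=\psi(s,t)$; plugging into \eqref{e0} gives $\psi_{ss}=\psi_t+F'(\psi)$, while the saturated estimate reads $\psi_s^2=2F(\psi)$ and $\psi_{ss}^2=F'(\psi)^2$. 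Differentiating $\psi_s^2=2F(\psi)$ in $s$ gives $\psi_s\psi_{ss}=F'(\psi)\psi_s$, so wherever $\psi_s\neq 0$ (which is everywhere, since $Du\neq 0$) we get $\psi_{ss}=F'(\psi)$, and then the equation forces $\psi_t=0$. Hence $u=\psi(s)=g(\langle a,x\rangle+\alpha)$ is time-independent with planar level sets, as claimed; one absorbs any normalization of $a$ into $g$ and the constant $\alpha$.

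The main obstacle I expect is the continuation/propagation step: turning the local conclusion $P\equiv 0$ near $(x_0,t_0)$ into a global one while controlling the possible appearance of zeros of $Du$. The clean way around it is precisely the combination used above — Corollary \ref{main4} rules out $F(u)=0$ along any time slice unless $u$ is trivial on that slice, so on the nontrivial part $\{P=0\}$ one cannot have $Du=0$, and hence $\{|Du|>0\}$ is relatively closed as well as open in each slice; connectedness then finishes it. A secondary technical point is justifying that the unit vector $a$ is independent of $t$ and of $x$ simultaneously, which is where one must use the rank-one structure of the full space-time gradient of $u$ and not merely the spatial Hessian.
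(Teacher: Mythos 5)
Your proposal has the right overall shape — use Theorem \ref{main3} to get $P\le 0$ globally, use Lemma \ref{L1} and the strong maximum principle to propagate $P\equiv 0$, then exploit rigidity — but there is a genuine gap in the degenerate case, and your final step takes a different (and in principle workable) route from the paper's.

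\textbf{The gap.} When $Du(x_0,t_0)=0$ you correctly get $F(u(x_0,t_0))=0$ and, via Corollary \ref{main4}, that $u(\cdot,t_0)\equiv u_0$ is constant. Your next step — showing $u_t(\cdot,t_0)=0$ and invoking discreteness of the zero set of $F$ together with continuity of $t\mapsto u(x_0,t)$ — does \emph{not} give global constancy of $u$ in $t<t_0$. Backward parabolic problems are ill-posed in general, so knowing the data and its time-derivative vanish on the slice $\{t=t_0\}$ does not by itself force the solution to vanish for earlier times. The paper handles this by writing $v=u-u_0$, observing $F'(u_0)=0$ (since $F\ge 0$ and $F(u_0)=0$) and $|\Delta v-\partial_t v|=|F'(v+u_0)-F'(u_0)|\le C|v|$, and then invoking Chen's backward uniqueness theorem for parabolic inequalities, followed by forward uniqueness of bounded solutions to conclude $u\equiv u_0$ on all of $\Rn\times(-\infty,0]$. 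The same backward/forward uniqueness ingredient is also what lets one rule out zeros of $Du$ inside $K=\{P=0\}$ during the continuation argument; your remark that ``either $|Du|=0$ — impossible since then $F(u)=0$ and Corollary \ref{main4} would trivialize things'' still needs this uniqueness to turn ``trivial on a slice'' into ``trivial everywhere.'' Without citing or reproducing these uniqueness results, this part of your argument does not close.

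\textbf{The different route at the end.} Once $P\equiv 0$ and $Du\not=0$ everywhere, you extract rigidity from the equality case of the Cauchy–Schwarz step in Lemma \ref{L1}, obtaining $D^2u=\lambda\,\tfrac{Du\otimes Du}{|Du|^2}$; differentiating $|Du|^2=2F(u)$ then gives $\lambda=F'(u)$, hence $\Delta u=F'(u)$ and $u_t=0$ from the equation, after which the rank-one Hessian forces $Du/|Du|$ to be a fixed unit vector $a$ and $u=g(\langle a,x\rangle+\alpha)$. This is correct if you establish $u_t=0$ \emph{before} claiming $a$ is time-independent — as written, you introduce the ansatz $u=\psi(\langle a,x\rangle,t)$ before justifying that $a$ does not depend on $t$, which is circular; but the fix is just to reorder. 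The paper instead uses the hodograph-type substitution $\nu=H(u)$ with $H'=(2F)^{-1/2}$ (well defined because the range of $u$ avoids the zero set of $F$, which is where the discreteness hypothesis enters), shows $\nu$ solves the heat equation with $|D\nu|\equiv 1$, and applies the Liouville theorem on $\Rn\times(-\infty,0]$ to each $D_i\nu$ to conclude $D\nu$ is a constant vector, hence $\nu_t=0$ and $\nu$ is affine. The paper's approach parallels the elliptic proof of Theorem 5.1 in \cite{CGS} and is cleaner; your rank-one approach is a valid alternative and arguably more elementary, but you should be explicit about the order of deductions, and you still owe the reader the backward/forward uniqueness step for the degenerate alternative.
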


\begin{proof}
We begin by observing that it suffices to prove the theorem under the hypothesis that $t_0=0$. In fact, once that is done, then if $t_0<0$ we consider the function $v(x,t) = u(x,t+t_0)$. For such function we have $P(v,x,0) = P(u,x,t_0)$ and therefore $v$ satisfies the same hypothesis as $u$, except that $P(v,x_0,0) = 0$. But then we conclude that $v(x,t) = u(x,t+t_0)= g(<a,x> + \alpha)$, which implies the desired conclusion for $u$ as well.

We thus assume without restriction that $P(u,x_0,0) = 0$, and consider the set
\[
A= \{x \in \Rn\mid P(u, x, 0)=0\}.
\]
By the continuity of $P$ we have that $A$ is closed, and since $(x_0,0)\in A$, this set is also non-empty. We distinguish two cases:
\begin{itemize}
\item[\textbf{Case 1:}] There exists $x_1 \in A$ such that $Du(x_1,0) = 0$;
\item[\textbf{Case 2:}] $Du(x,0) \neq 0$ for every $x\in A$.
\end{itemize}

If Case 1 occurs, then from the fact that $P(u,x_1,0)=0$ we  obtain  that $F(u(x_1, 0))=0$. By Corollary \ref{main4} we thus conclude that must be $u(\cdot, 0)\equiv u_0 = u(x_1, 0)$. At this point we observe that, since by assumption $F\ge 0$, and $F(u_0) = 0$, we must also have $F'(u_0) = 0$. Therefore, if we set $v=u-u_0$, then by the continuity of $F''$ and the fact that $u\in L^\infty(\Rn)$,
we have 
\[
|F'(u)| = |F'(v+u_0)| = |F'(v+u_0)- F'(u_0)| \le \int_{u_0}^{v +u_0} |F''(s)| ds \le C |v|.
\]
Since by \eqref{e0} we have $\Delta v - \partial_t v = \Delta u - \partial_t u = F'(u)$,  we see that  $v$ is thus a  solution of the following inequality
\[
|\Delta v - \partial_t v| \leq C|v|.
\]
Since $v(\cdot,0) = 0$, by the backward uniqueness result in Theorem 2.2 in \cite{C}, we  have that $u\equiv u_0$ in $\Rn \times (-\infty,0]$, from which the desired conclusion follows in this case. 

If instead Case 2 occurs, we prove that $A$ is also open. But then, by connectedness, we conclude in such case that $A = \Rn$. To see that $A$ is open fix $x_1\in A$. Since $Du(x_1, 0) \neq 0$, by the continuity of $Du$ we conclude the existence of $r>0$ such that $Du(x,t)\neq 0$ for every $(x,t)\in G = B(x_1,r)\times (-r^2,0]$. By Lemma \ref{L1} above we conclude that $P(u,\cdot,\cdot)$ is a sub-caloric function in $G$. Since by Theorem \ref{main3} we know that $P(u,\cdot,\cdot) \leq 0$, by the strong maximum principle we conclude that $P(u,\cdot,\cdot) \equiv 0$ in $G$. In particular, $P(u,x,0)=0$ for every $x\in B(x_1,r)$, which implies that $A$ is open. 

Since as we have seen the desired conclusion of the theorem does hold in Case 1, we can without loss of generality assume that we are in Case 2, and therefore $Du(x,0)\not = 0$ for every $x\in A = \Rn$. Furthermore, since for $x\in A$ we have $P(u,x,0) = 0$, we also have 
\begin{equation}\label{Puu}
|Du(x,0)|^2=  2 F(u(x,0)),\ \ \ \ \text{for every}\ x\in \Rn.
\end{equation}

Next, we consider the set
\[
K= \big\{(x, t) \in \Rn \times (-\infty, 0]\mid P(u,x,t)=0\big\}.
\]
We note that $K$ is closed and non-empty since by assumption we know that $(x_0,0)\in A$ (in fact, by \eqref{Puu} we now know that $\Rn \times \{0\} \subset K$). Let $(y_1,t_1) \in K$. If $Du(y_1,t_1)=0$, we  can argue as  above (i.e., as if it were $t_1=0$) and conclude  by backward  uniqueness that $u \equiv u(y_1,t_1)$ in $\Rn \times (-\infty,t_1]$. Then, by the forward uniqueness of bounded solutions, see Theorem 2.5 in \cite{LU2}, we  can infer that $u \equiv u(y_1,t_1)$ in $\Rn \times (t_1,0]$. All together, we would have proved that $u \equiv u(y_1,t_1)$ in $\Rn \times (-\infty,0]$ and therefore the conclusion of the theorem would follow.

Therefore from now on,   without loss of  generality, we may assume that $Du$ never vanishes in $K$.  With this assumption in place, if $(y_1,t_1) \in K$, then since $Du(y_1,t_1) \neq 0$,  by continuity there exists $r>0$ such that $Du$ does not vanish in $G = B_{r}(y_1) \times (t_1-r^2,t_1)$. But then, again by Lemma \ref{L1}, the function $P(u,\cdot,\cdot)$ is sub-caloric in $G$. Since $P(u,\cdot,\cdot)\le 0$ in $G$ (Theorem \ref{main5}) and $P(u,y_1,t_1) = 0$ ($(y_1,t_1) \in K$), we can apply the strong maximum principle to conclude that $P\equiv 0$ in $G$. Then, again by connectedness, as in the case when $t_1=0$, we  conclude that $\Rn \times\{t_1\} \subset K$.  In particular, we  have that  $P(u,y_1,t)=0$ when $t \in (t_1 - r^2,t_1]$. Therefore, we  can now  repeat the   arguments  above with $(y_1,t)$ in place  of $(y_1,t_1)$ for each such $t$ and conclude  that $P \equiv 0$ in $\Rn \times ( t_1 - r^2,t_1]$.

We now claim that:
\begin{equation}\label{Puuu}
K = \Rn \times (-\infty,0],\ \ \ \text{or equivalently}\ \ \ P(u,x,t) = 0, \ \text{for every}\ (x,t)\in \Rn \times (-\infty,0].
\end{equation}

Suppose the claim not true, hence  $P \not\equiv 0$ in $\Rn \times (-\infty,0]$. From the above arguments it follows that if for $t_2 <0$ there exists $y_2\in \Rn$ such that $P(u,y_2,t_2) \neq 0$,  then it must be $P(u,x,t_2) \neq 0$ for all $x \in \Rn$. We define
\[
T_0= \text{sup} \{t < 0\mid P(u,\cdot,t) \neq 0\}.
\]
Since we are assuming the claim not true, we must have $\{t< 0\mid P(u,\cdot,t) \neq 0\} \neq \varnothing$, hence $T_0 \le 0$ is well-defined. We first observe that $T_0 < 0$. In fact, since by the hypothesis $(x_0,0)\in K$ and we are assuming that we are in Case 2, we have already proved above the existence of $r>0$ such that $\Rn\times (-r^2,0]\subset K$. This fact shows that $T_0 \le -r^2<0$. Next, we see that it must be $P(u,\cdot,T_0)=0$. In fact, if this were not the case there would exist $y_2\in \Rn$ such that $P(u, y_2, T_0) < 0$. Since $T_0<0$, by continuity we would have that $P(u,y_2,t) < 0$, for all  $t \in [T_0,T_0 + \delta_1)$ for some $\delta_1 > 0$. By the arguments above, this would imply that $P$ never vanishes in $\Rn \times [T_0,T_0 + \delta_1)$, in contradiction with the definition of $T_0$. Since, as we have just seen $P(u,\cdot,T_0)=0$, arguing again as above we conclude that $P \equiv 0$ in $\Rn \times (T_0- r^2,T_0]$ for some $r > 0$. But this contradicts the definition of $T_0$. 

This contradiction shows that $\{t < 0\mid P(u,\cdot,t) \neq 0\} = \varnothing$, hence the claim \eqref{Puuu} must be true. We also recall that we are assuming that $Du$ never vanishes in $K=\Rn \times (-\infty,0]$.

In conclusion, we have that
\begin{equation}\label{e:p1}
|Du|^2= 2 F(u)\ \ \ \ \ \ \text{in}\ \Rn\times (-\infty,0],\ \ \ \text{and}\ \ Du\neq 0.
\end{equation}
At this point we argue as in the proof of Theorem 5.1 in \cite{CGS}, and we let $\nu= H(u)$, where $H$ is a function to be suitably chosen subsequently.
Then, we have that
\[
\Delta \nu - \nu_t= H^{''}(u)|Du|^2 + H'(u) \Delta u - H'(u)u_t.
\]
By using \eqref{e0} and \eqref{e:p1}, we conclude that
\begin{equation}\label{e:p3}
\Delta \nu - \nu_t= 2 H^{''}(u) F(u)  +  H'(u) F'(u).
\end{equation}
Let  $u_0= u(0, 0)$ and define 
\[
H(u)= \int_{u_0}^u (2F(s))^{-1/2} ds.
\]
Since $|Du|(x, t) > 0$ for all $(x, t) \in \Rn \times (-\infty, 0]$, we have from \eqref{e:p1}
that $F(u(x, t)) > 0$.  Therefore, if the zero  set of $F$ is ordered  in the following manner, $a_0 < a_1 < a_2 < a_3< a_4 < ...$, then by connectedness, we have that $F(u(\Rn \times (-\infty, 0])) \subset (a_i, a_{i+1})$ for some $i$. We infer that $H$ is well defined and is $C^{2, \beta}$, and with this $H$ it is easy to check that the right-hand side in \eqref{e:p3} is zero, i.e., $\nu$ is a solution to the heat equation in $\Rn \times (-\infty, 0]$. Moreover, by definition of $H$ and  \eqref{e:p1}, 
\[
|D \nu|^2= H'(u)^2 |Du|^2 =1,
\]
i.e., $D \nu$ is bounded in $\Rn \times (-\infty, 0]$. Since $\nu_i = D_{x_i}\nu$ is a solution to the heat equation for each $i \in {1, ....n}$, by Liouville's theorem in $\Rn \times (-\infty, 0]$ applied to $\nu_i$, we conclude that $D\nu$ is constant, hence $\Delta \nu = 0$. This implies $\nu_t = 0$, hence $\nu$ is time-independent. Hence, there exist $a\in \Rn$ and $\alpha \in \R$ such that $\nu= <a,x> + \alpha$. The desired conclusion now follows by taking $g=H^{-1}$.  This completes the proof of the theorem.

\end{proof}

\section{A parabolic version of the conjecture of De Giorgi}\label{S:dgc}

Motivated by the result in Theorem \ref{main6}, the fact that $\Rn \times (-\infty, 0]$ is the appropriate setting for the parabolic Liouville type theorems, and the crucial role played by them in the proof of the original conjecture of De Giorgi, at least for $n \leq 3$ (see \cite{GG}, \cite{AC}, \cite{GG2}), it is tempting to propose the following parabolic version of De Giorgi's conjecture:

\vskip 0.2in

\noindent CONJECTURE 1: \emph{Let $u$  be a solution in $\Rn \times (-\infty,0]$ to
\[
\Delta u - u_t= u^3-u,
\]
such that $|u|\le 1$, and $\partial_{x_n} u (x, t)  > 0$ for all $(x, t) \in \Rn \times (-\infty,0]$. Then, $u$ must  be one dimensional and  independent of $t$, at least for $n \leq 8$. In other words, for $n \leq 8$ the level sets of $u$ must be vertical hyperplanes, parallel to the $t$ axis}.

However, Matteo Novaga has kindly brought to our attention that, stated this way, the conjecture is not true. There exist in fact eternal traveling wave solutions of the form
\begin{equation}\label{tw}
v(x',x_n,t) = u(x',x_n - ct),\ \ \ \ \ \ c\ge 0,
\end{equation}
for which $\partial_{x_n} u (x)  > 0$. This suggests that one should amend the above in the following way.

\vskip 0.2in

\noindent CONJECTURE 2: \emph{Let $u$  be a solution in $\Rn \times (-\infty,0]$ to
\[
\Delta u - u_t= u^3-u,
\]
such that $|u|\le 1$, and $\partial_{x_n} u (x,t)  > 0$ for all $(x,t)  \in \Rn \times (-\infty, 0]$. Then, $u$ must  be an eternal traveling wave}.

We would still like to regard Conjecture 2 as a parabolic form of De Giorgi's conjecture since, if we also have $u_t \ge 0$, then $u$ must be independent of $t$, and thus we would be back into the framework of the original conjecture of De Giorgi. For interesting accounts of traveling waves solutions we refer the reader to the papers \cite{CGHNR} and \cite{G}. 

In closing, we propose to modify Conjecture 1 by adding to it the assumption that $u_t \ge 0$. With such hypothesis Conjecture 1 would represent a weaker form of Conjecture 2. Nonetheless, it seems to offer some additional challenges with respect to the already remarkable ones presented by the by now classical conjecture of De Giorgi.  We hope that it will stimulate interesting further research.

\end{document}